\documentclass[a4paper]{amsart}
\usepackage{amsthm,amssymb}
\usepackage[T1]{fontenc}
\usepackage[utf8]{inputenc}
\usepackage{enumitem}
\usepackage{mathrsfs}
\usepackage{dsfont}
\usepackage{lmodern}
\usepackage{latexsym}
\usepackage{scalerel,stackengine}
\usepackage{nicefrac}
\usepackage{natbib}

\usepackage{graphicx}
\usepackage[colorlinks,citecolor=blue,urlcolor=blue]{hyperref}

\usepackage{tikz}

\usepackage{thmtools}

\declaretheorem[style=definition,name=Definition,qed=$\dashv$]{definition}

\declaretheorem[style=remark,name=Example,qed=$\dashv$]{example}

\numberwithin{equation}{section}

\newtheorem{theorem}{Theorem}[section]
\newtheorem{lemma}[theorem]{Lemma}
\newtheorem{proposition}[theorem]{Proposition}
\newtheorem{corollary}[theorem]{Corollary}

\theoremstyle{remark}
\newtheorem{problem}{Problem}

\newcommand\boldd{\boldsymbol{d}} 
\newcommand{\cond}{\con_{\boldd}} 
\newcommand{\notcond}{\notcon_{\boldd}} 

\newcommand\cl[1]{\overline{#1}} 

\newcommand\lld{\ll_{\boldd}} 
\newcommand{\Basis}{\mathscr{B}} 

\newcommand\frP{\mathfrak{P}}

\title{Grzegorczyk and Whitehead points: the story continues}

\author{Rafa\l\ Gruszczy\'nski, Santiago Jockwich Martinez}

\date{}

\address{Rafa\l\ Gruszczy\'nski\\
Department of Logic\\
Nicolaus Copernicus University in Toru\'n\\
Poland}

\email{gruszka@umk.pl}

\urladdr{www.umk.pl/\textasciitilde gruszka}

\address{Santiago Jockwich Martinez\\
Department of Philosophy, Classics, History of Art and Ideas\\
University of Oslo\\
Norway}

\email{s.j.martinez@ifikk.uio.no}



\newcommand\frB{\mathfrak{B}}


\DeclareMathSymbol{\ppartof}{\mathord}{AMSa}{64}
\DeclareMathSymbol{\partof}{\mathrel}{AMSa}{64}
\DeclareSymbolFont{symbolsC}{U}{txsyc}{m}{n}
\DeclareMathSymbol{\poverl}{\mathord}{symbolsC}{7}
\DeclareMathSymbol{\overl}{\mathrel}{symbolsC}{7}
\DeclareMathSymbol{\pext}{\mathord}{symbolsC}{78}
\DeclareMathSymbol{\npartof}{\mathrel}{symbolsC}{97}
\DeclareMathSymbol{\ningr}{\mathrel}{symbolsC}{64}
\DeclareMathSymbol{\nll}{\mathrel}{symbolsC}{51}

\newcommand\ext{\mathrel{\bot}}

\DeclareMathOperator{\Int}{Int}      
\DeclareMathOperator{\Cl}{Cl}        

\newcommand\iffdef{\;\mathrel{\mathord{:}\mathord{\longleftrightarrow}}\;}

\newcommand\iffslim{\longleftrightarrow}
\newcommand\defeq{\coloneqq}

\newcommand\rarrow{\longrightarrow}
\newcommand\larrow{\longleftarrow}

\newcommand\fun{\ensuremath{\rightarrow}} 
\newcommand{\power}{\mathcal{P}}

\newcommand{\zero}{\mathsf{0}}
\newcommand{\one}{\mathsf{1}}













\let\separ=\msep

\newcommand{\con}{\mathrel{\mathsf{C}}} 
\DeclareMathOperator{\cont}{\mathsf{C}_{\mathrm{T}}} 

\newcommand\mathbackslash{\raisebox{.4pt}{\texttt{/}}}
\def\notcon{
  \renewcommand\stacktype{L}\mathrel{\ensurestackMath{%
  \ThisStyle{\stackon[0pt]{\SavedStyle\con}{\SavedStyle\mathbackslash}}}}%
}

\let\separ=\notcon






\newcommand{\prePt}{\mathbf{Q}_G}
\newcommand{\prePtW}{\mathbf{Q}_W}
\newcommand{\Abs}{\mathbf{A}}

\let\Grz=\Pt
\newcommand\Eq{\mathbf{Eq}}
\newcommand\Wthd{\mathbf{W}}



\newcommand{\topo}{\mathscr{O}}

\DeclareMathOperator{\RO}{RO} 

\newcommand{\fil}{\mathscr{F}}


\DeclareMathOperator\downop{\downarrow} 

\newcommand\twodownop{\raisebox{-1pt}{$\mathop{\rotatebox{90}{$\twoheadleftarrow$}}$}} 


\newfont{\eurxi}{eurm10 at 10.95pt}
\newfont{\eurviii}{eurm7 at 8pt}
\newfont{\eurvii}{eurm7}

\newcommand\Real{\mathds{R}} 

\def\dywiz{\kern0sp\discretionary{-}{}{-}\penalty10000\hskip0sp\relax}



\newcommand\Klass{\boldsymbol{\mathsf{K}}} 
\newcommand\BA{\boldsymbol{\mathsf{BA}}} 
\newcommand\BCA{\boldsymbol{\mathsf{BCA}}} 
\newcommand\TBCA{\boldsymbol{\mathsf{TBCA}}} 
\newcommand\BWCA{\boldsymbol{\mathsf{BWCA}}} 

\newcommand\Even{\mathds{E}}
\newcommand\Odd{\mathds{O}}


\newcommand\frp{\mathfrak{p}}
\newcommand\frq{\mathfrak{q}}
\newcommand\frr{\mathfrak{r}}

\DeclareSymbolFont{stx}{OMS}{txsy}{m}{n}
\DeclareMathSymbol{\Ing}{\mathrel}{stx}{118}
\DeclareMathSymbol{\ll}{\mathrel}{stx}{28}
\newcommand{\llt}{\ll_{\mathrm{T}}}

\DeclareMathSymbol{\medcirc}{\mathord}{symbolsC}{7}
\DeclareMathSymbol{\Ext}{\mathrel}{symbolsC}{78}

\newcommand{\covers}{\trianglerighteq} 
\newcommand\covered{\trianglelefteq} 
\newcommand\ncovered{\ntrianglelefteq} 

\DeclareMathSymbol{\nIng}{\mathrel}{symbolsC}{64}
\DeclareMathSymbol{\Ov}{\mathrel}{symbolsC}{7}

\newcommand{\coloneqq}{\mathrel{\mathord{:}\mathord{=}}}


\newsavebox{\putwodownop}
\savebox{\putwodownop}{$\exists\mathord{\twodownop}$}


\begin{document}

\begin{abstract}
One of the main goals of region-based theories of space is to formulate a geometrically appealing definition of \emph{points}. The paper is devoted to the analysis of two such seminal definitions: Alfred N. Whitehead's (\citeyear{Whitehead-PR}) and Andrzej Grzegorczyk's (\citeyear{Grzegorczyk-AGWP}). Relying on the work of Loredana Biacino's and Ginagiacomo Gerla's (\citeyear{Biacino-Gerla-CSGWDP}), we improve their results, solve some open problems concerning the mutual relationship between Whitehead and Grzegorczyk points, and put forward open problems for future investigation.

\medskip

\noindent MSC: 00A30, 03G05, 06E25.

\medskip

\noindent Keywords: Boolean contact algebras; region-based theories of space; point-free theories of space; points; spatial reasoning; Grzegorczyk; Whitehead
\end{abstract}

\maketitle

\section*{Introduction}

Alfred N. Whitehead (\citeyear{Whitehead-PR}) was one of the first thinkers who---following ideas to be found in the seminal paper of \citet{Laguna-PLSSS}---proposed a~geometrically appealing  definition of \emph{point} in terms of \emph{regions of space} and the \emph{contact} relation. His construction was inventive and elegant yet lacked mathematical rigor. In the 1960s, the Polish logician Andrzej Grzegorczyk put forward one of the first mathematically satisfactory systems of region-based topology, in which he formulated a different, yet also geometrically motivated, construction of points. The comparison of the two approaches was carried out by Loredana Biacino and Giangiacomo Gerla (\citeyear{Biacino-Gerla-CSGWDP}), who---under some reasonable assumptions---demonstrated that the two notions of \emph{point} coincide.

The seminal paper by \citet{Biacino-Gerla-CSGWDP} is the foundation for our work.  The two main results of the paper were Theorems 5.1 and 5.3. The former establishes that every Grzegorczyk representative of a point is a Whitehead representative; the latter shows that the reverse inclusion holds for those Whitehead representatives that can be represented by countable families of regions.

To prove the first inclusion Biacino and Gerla work with the second-order theory of Grzegorczyk's (\citeyear{Grzegorczyk-AGWP}). We show that the specific axioms can be eliminated in favour of the standard first-order mereotopological postulates. Moreover, we prove that the second-order monadic statement `every Grzegorczyk representative is a Whitehead representative' is equivalent (in the subclass of Boolean weak contact algebras in the sense of \citet{Duntsch-Winter-WCS} in which every region has a non-tangential part)\marginpar{\tiny !!!} to the first-order statement `there are no atoms'. For the completeness of presentation we show that no part of this equivalence holds in the (general) class of Boolean weak contact algebras.

As for the second inclusion, we identify a gap in the proof of Theorem 5.3, and we show that it cannot be carried out without assuming an additional axiom postulating coherence, a mereotopological counterpart of the connectedness property. We also improve the original result by addressing an open problem from \citep{Biacino-Gerla-CSGWDP}. That is, we show that the countability assumption about Whitehead representatives can be eliminated, if we assume a stronger second-order version of the standard mereotopological interpolation axiom.

Moreover, we prove that in complete structures, purely mereological notions are too weak to guarantee the existence of Whitehead representatives of points. The English logician himself envisaged this, but no general proof of this fact exists in the literature so far.\footnote{We elaborate on this further on p.\,\pageref{page:purely-mereological}.}

We also provide various examples of Whitehead points within algebraic structures. This provides evidence for the claim that Whitehead points are mathematically tractable.

More or less from the beginning of the 21st century, Boolean contact algebras (see e.g.,~\citealp{Stell-BCAANATRCC,Bennett-Duntsch-AAT}) have provided the standard mathematical framework for doing region-based topology. This is a~comfortable situation that allows for the unification and comparison of different approaches to point-free theories of space. For this reason, in this paper, we also use the aforementioned algebras. This approach is different from the original approaches of Whitehead and Grzegorczyk, as the former used a \emph{contact} relation as the only primitive, and the latter worked in mereology (the theory of the \emph{part of} relation) extended with contact. From a technical point of view, these differences are irrelevant. At the same time, the unified well-established environment of Boolean contact algebras allows for a precise and clear presentation of both approaches to region-based theories.

The paper is organized as follows: In Section~\ref{sec:WCA}, we review some preliminaries and  introduce the main objects of study, viz., Boolean (weak) contact algebras. In Section~\ref{sec:G-points}, we present two formal accounts of Grzegorczyk points, i.e.,  Grzegorczyk points defined in terms of equivalences classes and Grzegorczyk points understood as filters. Moreover,  we show that these two definitions are  equivalent in the context of Boolean weak contact algebras. Sections \ref{sec:W-points} addresses a formal account of Whitehead points. We study some of their properties and provide examples of such points within regular open algebras. This section witnesses as well a proof of the insufficiency of purely mereological notions for the existence of Whitehead points.  Section \ref{sec:G-points-are-W-points} studies the minimal constraints that a Boolean Weak Contact Algebras has to satisfy to guarantee that every Grzegorczyk representative of a point is a  Whitehead representative. In particular, in this section we strengthen Theorem 5.1 of \cite{Biacino-Gerla-CSGWDP}. In Section~\ref{sec:W-points-are-G-points-countable} we fill the mentioned gap in the original proof of Theorem 5.3 of   \cite{Biacino-Gerla-CSGWDP} and study the logical status of the second-order condition `every Whitehead representative is a Grzegorczyk representative'. In Section~\ref{sec:W-points-are-G-points-general}, we generalize Theorem 5.3  to  Whitehead points of any size.

\section{Weak-contact and contact algebras}\label{sec:WCA}

As usual,  $\neg$, $\wedge$, $\vee$, $\rarrow$, $\iffslim$, $\forall$ and $\exists$  denote the standard logical constants of negation, conjunction, disjunction, material implication, material equivalence, universal and the existential quantifier. We use `$\nexists$' as an abbreviation for `$\neg\exists$'. Moreover, $\iffdef$ means \emph{equivalent by definition}, and $\defeq$ means \emph{equal by definition}. We use $\omega$ to denote the set of natural numbers understood as von Neumann ordinals. For a fixed space $X$ and $x\subseteq X$, $\complement x\defeq X\setminus x$ is the set-theoretical complement of~$x$ in~$X$. $|X|$ is the~cardinal number of a~set $X$, and $\power(X)$ is its power set.

Moreover, let:
\[
\frB=\langle \boldsymbol{B},\mathord{\cdot},\mathord{+},-,\zero,\one\rangle
\]
be a~Boolean algebra (BA for short) with the operations of, respectively, meet, join, and boolean complement; and with the two distinguished elements: the minimum $\zero$ and the maximum $\one$. Elements of the domain will be called \emph{regions}. The class of all Boolean algebras will be denoted by `$\BA$'. We will often refer to the domain of BA via its name `$\frB$'. Notice that this convention will not lead to any ambiguities.

In $\frB$ we define two standard order relations:
\begin{align}
x\leq y&{}\iffdef x\cdot y=x\,,\tag{$\mathrm{df}\,\mathord{\leq}$}\\
x<y&{}\iffdef x\leq y\wedge x\neq y\,.\tag{$\mathrm{df}\,\mathord{<}$}
\end{align}
In the former case we say that $x$ is \emph{part} of~$y$ or that $x$ is \emph{below}~$y$, in the latter that $x$ is \emph{proper part} of~$y$ or that $x$ is \emph{strictly below}~$y$.

Any Boolean algebra $\frB$ is turned into a~\emph{Boolean contact algebra} (BCA for short) by extending it to a~structure $\langle\boldsymbol{B},\mathord{\cdot},\mathord{+},-,\zero,\one,\con\rangle$ where $\mathord{\con}\subseteq\boldsymbol{B}^2$ is a~\emph{contact} relation which satisfies the following five axioms:

\begin{gather}
\neg(\zero\mathrel{\mathsf{C}} x),\label{C0}\tag{C0}\\
x\leq y\wedge x\neq\zero\rarrow x\mathrel{\mathsf{C}} y,\label{C1}\tag{C1}\\
x\mathrel{\mathsf{C}} y\rarrow y\mathrel{\mathsf{C}} x,\label{C2}\tag{C2}\\
x\leq y\rarrow\forall_{z\in B}(z\mathrel{\mathsf{C}} x\rarrow z\mathrel{\mathsf{C}} y)\,, \label{C3}\tag{C3}\\
x \con (y+ z) \rarrow x \con y\vee x\con z\,.\label{C4}\tag{C4}
\end{gather}
The complement of $\con$ will be denoted by `$\separ$', and in the case $x\separ y$ we say that $x$ is \emph{separated from} $y$.  The class of all Boolean contact algebras will be denoted by `$\BCA$'. If $\mathord{\con}$ satisfies \eqref{C0}--\eqref{C3}, it is called---after \citet{Duntsch-Winter-WCS}---a~\emph{weak contact} relation and the corresponding structure bears the name of a Boolean \emph{weak contact} algebra (BWCA for short). The class of all weak contact algebras will be denoted by `$\BWCA$'.

We introduce the convention according to which given a~class $\Klass$ of structures and some conditions $\varphi_1,\ldots,\varphi_n$ put upon elements of $\Klass$, $\Klass+\varphi_1+\ldots+\varphi_n$ (or $\Klass+\{\varphi_1,\ldots,\varphi_n\}$) is the subclass of $\Klass$ in which every structure satisfies all $\varphi_1,\ldots,\varphi_n$, e.g.,
\[
\BCA=\BWCA+\eqref{C4}\,.
\]

In $\frB\in\BWCA$ we define an auxiliary relation of \emph{non-tangential} inclusion (or \emph{way below}, \emph{well-inside}) relation:
\begin{equation}\label{df:ll}
x\ll y\iffdef x\separ -y\tag{$\mathrm{df}\,\mathord{\ll}$}\,.
\end{equation}
We also define $x\overl y$ to mean that $x\cdot y\neq\zero$, and take $\mathord{\ext}\subseteq\frB\times\frB$ to be the set-theoretical complement of $\overl$. In the former case we say that $x$ \emph{overlaps} $y$, in the latter, that $x$ \emph{is disjoint from} $y$ or $x$ \emph{is incompatible with} $y$. A structure $\langle \boldsymbol{B},\mathord{\cdot},\mathord{+},-,\zero,\one,\mathord{\overl}\rangle$ is a~standard example of a~BCA.\footnote{The overlap relation is actually the smallest contact relation on a~BCA, see \citep{Duntsch-Winter-CBCA}.} The most well-known interpretation of contact is the \emph{topological} one. For a~fixed space $\langle X,\topo\rangle$ we take the underlying algebra to be either the complete algebra $\RO(X)$ of all regular open subsets of $X$, or its subalgebra $B$.  The Boolean operations\footnote{ $\Int$ and $\Cl$ are the standard topological \emph{interior} and \emph{closure} operators.} are:
\begin{align*}
  x\cdot y &{}\defeq x\cap y \\
  x+ y & {}\defeq\Int\Cl(x\cup y)\\
  -x & {}\defeq\Int\complement x
\end{align*}
and the contact relation is given by:
\begin{equation*}
x\cont y\iffdef\Cl x\cap\Cl y\neq\emptyset\,.
\end{equation*}
Moreover, we have:
\[
x\llt y\iffslim \Cl x\subseteq y\,.
\]
The relation $\cont$ satisfies axioms \eqref{C0}--\eqref{C4}, so any topological contact algebra is in the class $\BCA$.

We may use a~similar interpretation on the whole power set algebra of $X$, i.e., $\langle\power(X),\mathord{\cont}\rangle$ is a Boolean contact algebra (provided $X$ is equipped with a~topology, of course). Observe that despite the algebra being atomic, the contact relation does not collapse to the overlap relation. For example, in the case of $\Real$ with the standard topology, the open intervals $(0,1)$ and $(1,2)$ are disjoint, yet they are in contact since their closures share an atom. However, we may look upon $\cont$ as a~form of an overlap relation since in this special case of the power set algebra, we have:
\[
x\cont y\iffslim\Cl x\overl\Cl y\,,
\]
which usually is not true when we take into account regular open algebras. From this, it follows that closed sets are in contact only if they overlap, and thus the contact between atoms reduces to identity, if the underlying topology is $T_1$.

The following facts are standardly proven to hold in $\BWCA$:
\begin{gather}
x\ll y\rarrow x\leq y\,, \label{eq:llcIngr}\\
x\ll y\wedge y\ll x \rarrow x=y\,,\label{antis-ll}\\
x\ll y \wedge y\leq z \rarrow x\ll z\,, \label{eq:ll-ingr->ll}\\
x\leq y \wedge y\ll z \rarrow x\ll z\,,\label{eq:ingr-ll-ll}\\
x\ll y \wedge y\ll z \rarrow x\ll z\,, \label{eq:ll-ll->ll}\\
x\ll y\iffslim -y\ll -x\,.\label{eq:ll-contraposition}
\end{gather}

\begin{definition}
  An \emph{atom} of a Boolean (contact) algebra is a non-zero region $x$ that is minimal with respect to $\leq$ among non-zero regions. A BWCA is \emph{atomic} iff its underlying BA is atomic iff every non-zero region contains an atom. A BWCA is \emph{atomless} iff it does not have any atoms, i.e., satisfies the following condition:
  \begin{equation}\label{eq:no-atoms}
      (\forall x\in\frB\setminus\{\zero\})(\exists y\in \frB\setminus\{\zero\})\,y<x\,.\tag{$\nexists\mathrm{At}$}
  \end{equation}
\end{definition}

\section{Grzegorczyk points}\label{sec:G-points}
A~\emph{Grzegorczyk representative of a point} (for short: \emph{G-representative})\footnote{Both the term and its abbreviation are adopted from \citep{Biacino-Gerla-CSGWDP}.} in $\frB\in\BWCA$ is a~non-empty set $Q$ of regions such that:
\begin{gather}
\zero\notin Q\,,\tag{r0}\label{r0}\\
(\forall u,v\in Q)(u=v\vee u\ll v \vee v\ll u)\, , \tag{r1}\label{r1}\\
(\forall u\in Q)(\exists v\in Q)\; v\ll u\, , \tag{r2}\label{r2}\\
(\forall x,y\in\frB)\bigl((\forall u\in Q)(u\overl x\wedge u\overl y) \rarrow x\mathrel{\mathsf{C}} y \bigl)\, . \tag{r3}\label{r3}
\end{gather}
Let $\prePt$ be the set of all G-representatives of $\mathfrak B$.
The purpose of the definition is to formally grasp the intuition of a point as a~system of diminishing regions determining a~unique location in space. We call it a~\emph{representative}, since if we understand a~point as a perfect representation of a~location in space, then two different sets of regions may represent the same location (see Figure~\ref{fig:the-same-point} for a geometrical intuition on the Cartesian plane). Further, we will identify such G-representatives to be one point. Although the definition has a strong geometrical flavor, G-representatives may be somewhat strange entities in BCAs that have little to do with spatial intuitions. We will look at some indicative examples. But first, let us go through an example of a~G-representative in a~well-known setting: the reals.

\begin{figure}[t!]
\begin{center}\footnotesize
\begin{tikzpicture}
\draw[line width=1pt,dotted] (-0.25,-0.25) rectangle (3.25,3.25);
\draw[line width=1pt,dotted] (0,0) rectangle (3,3);
\draw[line width=1pt,dotted] (2.75,0.25) rectangle (0.25,2.75);
\draw[line width=1pt,dotted] (2.5,0.5) rectangle (0.5,2.5);
\draw[line width=1pt,dotted] (2.25,0.75) rectangle (0.75,2.25);
\draw[line width=1pt,dotted] (2,1) rectangle (1,2);
\draw[line width=1pt,dotted] (1.75,1.25) rectangle (1.25,1.75);

\draw (1.5,1.5) circle (2cm)
circle (1.75cm)
circle (1.5cm)
circle (1.25cm)
circle (1cm)
circle (0.75cm)
circle (0.5cm)
circle (0.25cm);

\node (0) at (-0.75,1.3) {$Q_1$};
\node (0) at (-0.75,-0.2) {$Q_2$};
\end{tikzpicture}
\end{center}
\vspace*{-12pt}
\caption{$Q_1$ and $Q_2$ representing the same location in two-dimensional Euclidean space}\label{fig:the-same-point}
\end{figure}
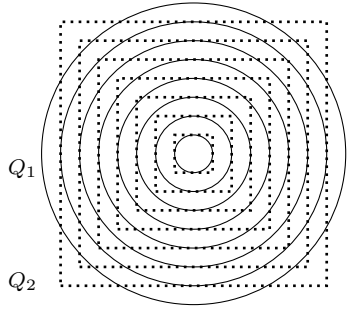

\begin{example}\label{ex:RO(R)}
Take the real line $\Real$ with the Euclidean topology. It is a standard result that the pair $\langle\RO(\Real),\mathord{\cont}\rangle$, where $\RO(\Real)$ is the complete algebra of regular open subsets of $\Real$ and $\mathord{\cont}$ is the standard topological interpretation of contact (as defined above) is a Boolean contact algebra.

Take $0\in\Real$. Obviously, the set:
\[
\textstyle\left\{\left(-\nicefrac{1}{n},\nicefrac{1}{n}\right)\,\middle\vert\,n\in \omega\setminus\{0\}\right\}
\]
is a~G-representative. But also:
\[
\textstyle\left\{\left(-\nicefrac{1}{n},\nicefrac{1}{n}\right)\,\middle\vert\,n\in\Odd\right\}
\]
where $\Odd\subseteq\omega$ is the set of odd numbers, and
\[
\textstyle\left\{\left(-\nicefrac{1}{r},\nicefrac{1}{r}\right)\,\middle\vert\,r\  \text{is a~positive irrational}\right\}
\]
are G-representatives standing for the same location in the one-dimensional space, i.e., number~0. Moreover, one can easily see that there are uncountably many such G-representatives.\footnote{The reader interested in philosophical issues related to Grzegorczyk points is asked to consult~\citep{Gruszczynski-Pietruszczak-SPM}.}
\end{example}

\begin{definition}
If $X,Y$ are subsets of a~BWCA, then $Y$ \emph{covers} $X$ (or $X$ is \emph{covered by} $Y$) iff for every $y\in Y$ there is $x\in X$ such that $x\leq y$. We write `$X\covers Y$' meaning $X$ covers $Y$, and `$X\trianglelefteq Y$' meaning $X$ is covered by $Y$. Let $\ncovered$ be the set-theoretical complement of $\covered$. 

For a~region $x$ of a~BWCA, let $\downop x\defeq\{y\in B\mid y\leq x\}$, i.e., $\downop x$ is the set of all parts of~$x$.
\end{definition}

The general fact that different G-representatives can represent the same location in space follows also from:

\begin{lemma}\emph{\cite[Lemma 5.6]{Gruszczynski-et-al-ASGPFT1}}
If $Q$ is a~G-representative in $\frB\in\BWCA$, then every subset of $Q$ covered by~$Q$ is also a~G-representative. In particular, for any region $x$, $Q\cap\downop x$ is a~G-representative, provided $Q\cap\downop x\neq\emptyset$.
\end{lemma}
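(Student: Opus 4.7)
The plan is to fix a subset $Q' \subseteq Q$ covered by $Q$---meaning every $q \in Q$ has a part in $Q'$---and verify \eqref{r0}--\eqref{r3} for $Q'$. Conditions \eqref{r0} and \eqref{r1} are inherited directly from $Q$ since $Q' \subseteq Q$. For \eqref{r2}, given $u \in Q'$, I would apply \eqref{r2} inside $Q$ to obtain some $w \in Q$ with $w \ll u$, then use the covering to pick $v \in Q'$ with $v \leq w$, and combine these via \eqref{eq:ingr-ll-ll} to conclude $v \ll u$.

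The main verification is \eqref{r3}. Assuming $u \overl x$ and $u \overl y$ for every $u \in Q'$, I aim to derive the same for every $u' \in Q$, so that \eqref{r3} applied in $Q$ itself gives $x \con y$. For an arbitrary $u' \in Q$, the covering property supplies $v \in Q'$ with $v \leq u'$, and the monotonicity of overlap then upgrades $v \overl x$, $v \overl y$ to $u' \overl x$, $u' \overl y$, closing the case.

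For the \emph{in particular} clause it suffices to show that whenever $Q \cap \downop x$ is nonempty, it is covered by $Q$. Fix any $q_0 \in Q \cap \downop x$ and take an arbitrary $q \in Q$. By \eqref{r1} exactly one of $q = q_0$, $q \ll q_0$, $q_0 \ll q$ holds; in the first two cases \eqref{eq:llcIngr} gives $q \leq q_0 \leq x$, so $q$ itself lies in $Q \cap \downop x$ and is trivially a part of $q$, while in the third case $q_0 \leq q$ by \eqref{eq:llcIngr} and $q_0 \in Q \cap \downop x$ witnesses the covering. I do not anticipate a genuine obstacle: the whole argument is a routine bookkeeping of the axioms, with the only small subtlety being the invocation of the trichotomy in \eqref{r1} to pick the right witness in each case.
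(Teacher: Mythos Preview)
Your proof is correct and entirely standard; the paper itself does not supply a proof for this lemma but only cites \cite[Lemma 5.6]{Gruszczynski-et-al-ASGPFT1}, so there is nothing to compare against. One trivial remark: \eqref{r1} only asserts that \emph{at least} one of the three alternatives holds, not exactly one (e.g.\ $q\ll q$ is possible for $q=\one$ or in non-coherent algebras), but your case analysis does not actually rely on exclusivity, so this is harmless.
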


\smallskip

In light of the above, to speak about points we need to be able to identify different G-representatives which stand for the same locus.

\subsection{Grzegorczyk points as quotients}\label{sec:G-points-as-quotients}
Let us begin with the definition:
\begin{equation}\tag{$\mathrm{df}\,\mathord{\twodownop}$}
\twodownop x\defeq\{y\in\frB\mid y\ll x\}
\end{equation}
and two lemmas.

\begin{lemma}\label{lem:coi-separ-ext}
If $Q_1$ and $Q_2$ are G-representatives in $\frB\in\BWCA$, then\/\textup{:}
\[
(\forall x\in Q_1)(\forall y\in Q_2)\,x\con y\qquad\text{iff}\qquad Q_2\covered Q_1.
\]
\end{lemma}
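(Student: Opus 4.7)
The statement is a biconditional; the reverse implication is routine, and the real work lies in the forward direction. For $(\Leftarrow)$, I would assume $Q_2 \covered Q_1$ and fix $x \in Q_1$ and $y \in Q_2$. The covering hypothesis supplies $y_0 \in Q_2$ with $y_0 \leq x$, and \eqref{r1} applied to $Q_2$ yields one of $y = y_0$, $y \ll y_0$, or $y_0 \ll y$. In the first two cases $y \leq x$, whereas in the third $y_0 \leq y \cdot x$; either way $y \cdot x \neq \zero$, so $y$ overlaps $x$, and overlap implies contact via a standard argument from \eqref{C1}--\eqref{C3} applied to $z \defeq y \cdot x$.

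For $(\Rightarrow)$, I would argue by contradiction. Suppose every pair from $Q_1 \times Q_2$ is in contact, and yet some $x \in Q_1$ admits no $y \in Q_2$ with $y \leq x$. By \eqref{r2} applied to $Q_1$, pick $x_1 \in Q_1$ with $x_1 \ll x$, so by definition of $\ll$ we have $x_1 \separ -x$. Apply the contrapositive of \eqref{r3} for $Q_2$ to the pair $(x_1, -x)$: some $u \in Q_2$ satisfies $u \ext x_1$ or $u \ext -x$. The second alternative would give $u \leq x$, contradicting the standing assumption on $x$, so necessarily $u \leq -x_1$.

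Finally, by \eqref{r2} applied to $Q_2$, pick $y \in Q_2$ with $y \ll u$; combining this with $u \leq -x_1$ via the monotonicity property \eqref{eq:ll-ingr->ll} yields $y \ll -x_1$, i.e., $y \separ x_1$. But $y \in Q_2$ and $x_1 \in Q_1$ were supposed to be in contact, a contradiction. The subtle step is the passage from the Boolean disjointness $u \cdot x_1 = \zero$ delivered by \eqref{r3} to the mereotopological separation $y \separ x_1$; this sharpening is exactly what the additional application of \eqref{r2} to $Q_2$ accomplishes, turning ``below $-x_1$'' into ``well-inside $-x_1$''. I expect this sharpening to be the only non-obvious move; everything else is bookkeeping with the defining properties of G-representatives.
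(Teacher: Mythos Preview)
Your argument is correct and follows essentially the same route as the paper: descend once in $Q_1$ via \eqref{r2}, invoke \eqref{r3} for $Q_2$ to locate an element disjoint from the smaller region, then descend once in $Q_2$ via \eqref{r2} to upgrade disjointness to separation. Your choice of $-x$ as the second test region in \eqref{r3} is in fact a mild simplification of the paper's version, which uses $y - x_1$ for some $y\in Q_2$ and must then verify separately that every element of $Q_2$ overlaps this region.
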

\begin{proof}
(i) Assume that $Q_2$ is not covered by $Q_1$, i.e., there is $x_1\in Q_1$ such that for every $y\in Q_2$, $y-x_1\neq\zero$.\footnote{`$x-y$' abbreviates `$x\cdot-y$'.}  By \eqref{r2}, there is $x_0\in Q_1\cap\twodownop x_1$ (i.e., by definition of $\ll$ we have $x_0\separ -x_1$). Observe that for every $z,y\in Q_2$, $z\overl y-x_1$. Indeed, if $z,y\in Q_2$, we have that either (a) $z\leq y$ or (b) $y\leq z$. If (a) holds, $z-x_1\leq z$ and $z-x_1\leq y-x_1$. If (b) holds, $y-x_1\leq z$. If $(\forall z\in Q_2)\,z\overl x_0$, then by \eqref{r3} we obtain that $x_0\con y-x_1$, a~contradiction. So there is $z_0\in Q_2$ such that $z_0\ext x_0$. By \eqref{r2} again there is $z_1\in Q_2\cap\twodownop z_0$, so $z_1\separ x_0$.

\smallskip

(ii) Suppose there are $x\in Q_1$ and $y\in Q_2$ such that $x\notcon y$, but $Q_2\covered Q_1$. Take $z\in Q_2\cap\downop x$. If $z\leq y$, then $y\overl x$, and if $y\leq z$, then $y \leq x$, a~contradiction, as $x\ext y$ by the fact that $\mathord{\overl}\subseteq\mathord{\con}$, which is easily verified.
\end{proof}

In consequence we have:

\begin{corollary}\label{cor:coi-equivalence}
Let $\frB\in\BWCA$. If $Q_1$ and $Q_2$ are G-representatives and $Q_1\covered Q_2$, then $Q_2\covered Q_1$.
\end{corollary}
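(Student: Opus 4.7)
The plan is to reduce the corollary to a direct application of Lemma~\ref{lem:coi-separ-ext} combined with the symmetry of the contact relation (axiom~\eqref{C2}).

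First, I would apply Lemma~\ref{lem:coi-separ-ext} with the roles of $Q_1$ and $Q_2$ interchanged: the assumption $Q_1 \covered Q_2$ translates, via that lemma, into the condition
\[
(\forall x \in Q_2)(\forall y \in Q_1)\, x \con y.
\]
Next, using \eqref{C2}, which says $x \con y \rarrow y \con x$, this is equivalent to
\[
(\forall y \in Q_1)(\forall x \in Q_2)\, y \con x.
\]
Finally, I would read Lemma~\ref{lem:coi-separ-ext} in its original orientation: since every pair $(y,x) \in Q_1 \times Q_2$ is in contact, the lemma yields $Q_2 \covered Q_1$, which is exactly what we want.

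Since each of the three steps is either a direct citation of the preceding lemma or the one-line use of symmetry of~$\con$, there is no substantive obstacle; the only thing to be careful about is matching the quantifier pattern in the lemma's statement to the situation after swapping $Q_1$ and $Q_2$, so as to apply it correctly in both directions.
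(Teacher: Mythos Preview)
Your argument is correct and matches the paper's intended reasoning: the paper presents the corollary as an immediate consequence of Lemma~\ref{lem:coi-separ-ext} (no proof is even written out), and your three steps just unwind that ``immediate'' by applying the lemma with the roles of $Q_1,Q_2$ swapped, using the symmetry axiom \eqref{C2}, and then applying the lemma again in the original orientation.
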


\begin{theorem}
If $\frB\in\BWCA$, then $\covered$ is an equivalence relation on the set of G-representatives.
\end{theorem}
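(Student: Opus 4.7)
The plan is to verify the three defining conditions of an equivalence relation on the set of G-representatives of $\frB$: reflexivity, symmetry, and transitivity. Since symmetry is precisely the statement of the immediately preceding Corollary~\ref{cor:coi-equivalence}, most of the substantive work has already been done, and the theorem will follow by chaining that corollary together with easy observations about the covering relation.

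For reflexivity, I would unpack $Q\covered Q$ as: for every $y\in Q$ there exists $x\in Q$ with $x\leq y$. Taking $x\defeq y$ and invoking reflexivity of $\leq$ settles this immediately; no axioms \eqref{r0}--\eqref{r3} are needed. For symmetry, I would simply cite Corollary~\ref{cor:coi-equivalence}, whose proof went through Lemma~\ref{lem:coi-separ-ext}: the symmetric characterization in that lemma via the condition $(\forall x\in Q_1)(\forall y\in Q_2)\,x\con y$ is what makes $\covered$ symmetric on G-representatives despite being manifestly non-symmetric on arbitrary subsets of $\frB$.

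For transitivity, suppose $Q_1\covered Q_2$ and $Q_2\covered Q_3$. I would then unpack $Q_1\covered Q_3$ as: for every $y\in Q_3$ there exists $x\in Q_1$ with $x\leq y$. Given such a $y$, the hypothesis $Q_2\covered Q_3$ supplies $w\in Q_2$ with $w\leq y$, and the hypothesis $Q_1\covered Q_2$ applied to this $w$ supplies $x\in Q_1$ with $x\leq w$; transitivity of the Boolean order $\leq$ then gives $x\leq y$, as desired. This step uses nothing beyond the definition of $\covered$ and transitivity of $\leq$.

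In summary, there is no serious obstacle: the only non-trivial ingredient is symmetry, which has already been obtained in Corollary~\ref{cor:coi-equivalence} by reducing the covering relation on G-representatives to the mutual contact condition of Lemma~\ref{lem:coi-separ-ext}. Reflexivity and transitivity are, respectively, reflexivity and transitivity of $\leq$ in the underlying Boolean algebra applied pointwise.
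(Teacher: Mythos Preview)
Your proof is correct and follows essentially the same approach as the paper: symmetry is obtained from Corollary~\ref{cor:coi-equivalence}, while reflexivity and transitivity are reduced to the corresponding properties of~$\leq$. The paper's proof is simply a terser statement of exactly what you wrote out in detail.
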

\begin{proof}
The symmetry of $\covered$ holds by  Corollary \ref{cor:coi-equivalence}. The reflexivity and transitivity of $\covered$ follow from the reflexivity and transitivity of~$\leq$.
\end{proof}

We are now in a position to say precisely that G-representatives $Q_1$ and $Q_2$ \emph{represent the same location} if and only if $Q_1$ is covered by $Q_2$ and $Q_2$ is covered by $Q_1$.
Therefore, it is reasonable to define \emph{points} as equivalence classes of $\covered$ on the set of all G-representatives~$\prePt$ (to emphasize the fact that $Q_1$ and $Q_2$ represent the same location, i.e., are mutually covered by each another, we will write `$Q_1\sim Q_2$'):
\[
\Eq\defeq\prePt/_{\mathord{\sim}}\,.
\]

For any two sets of regions $X$ and~$Y$ such that $X$ covers $Y$ and $Y$ covers $X$, we will say that $X$ and $Y$ are \emph{coinitial}.

\subsection{Grzegorczyk points as filters}\label{sec:G-points-as-filters}
The second (chronologically the first) idea---used by \citet{Grzegorczyk-AGWP}---is to define points as filters that are generated by G-representatives:
\begin{equation}\notag
\fil\ \text{is a point iff}\ (\exists Q\in\prePt)\,\fil=\{x\in\frB\mid(\exists q\in Q)\,q\leq x\}\,.
\end{equation}
By `$\fil_Q$', we will denote a~point generated by the G-representative~$Q$. These filters will be called \emph{G-points}, and the set of all G-points will be denoted by `$\Grz$', while its elements by small fraktur letters `$\frp$', `$\frq$' and `$\frr$', indexed if necessary. For every G-point $\fil_Q$ we have:
\begin{equation}\label{eq:conditions-for-in-FQ}
  x\in\fil_Q\iffslim(\exists y\in Q)\,y\ll x\iffslim(\exists y\in Q)\,y\leq x\,.
\end{equation}

Observe that for $Q_1,Q_2\in\prePt$:
\begin{equation}
Q_1\sim Q_2\iffslim(\exists \frp\in\Grz)\,Q_1\cup Q_2\subseteq\frp\,.
\end{equation}
\begin{proof}
($\rarrow$) This follows from Corollary~\ref{cor:coi-equivalence}, the definition of G-points and~\eqref{eq:conditions-for-in-FQ}.

\smallskip

($\leftarrow$) Let $X=\fil_Q$. Since both $Q_1$ and $Q_2$ are subsets of $\fil_Q$, they must be coinitial with $Q$, and so $Q_1\sim Q_2$.
\end{proof}
Thus, as we see, by considering points as filters we can recover the equivalence relation between G-representatives. The reverse transition---from equivalence classes to filters---is obvious, since for a~given class $[Q]_{\sim}$ it is enough to take $\fil_Q$.

Let us conclude this section with an observation that there is a~1--1 correspondence between G-points as equivalence classes and G-points as filters:
\begin{lemma}
Let $\frB\in\BWCA$. The function $f\colon\Eq\fun\Grz$ such that $f([Q]_{\sim})\defeq\fil_Q$ is a~bijection.
\end{lemma}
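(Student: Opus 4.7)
The plan is to check separately that $f$ is well-defined, surjective, and injective. Each of these will reduce to unpacking the definitions of $\fil_Q$, of $\covered$, and of G-points; no nontrivial mereotopological input should be needed beyond what is already in~\eqref{eq:conditions-for-in-FQ}.

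First I would verify that $f$ is well-defined, i.e., that $Q_1\sim Q_2$ implies $\fil_{Q_1}=\fil_{Q_2}$. Fix $x\in\fil_{Q_1}$, so that by~\eqref{eq:conditions-for-in-FQ} there is $q_1\in Q_1$ with $q_1\leq x$. Since $Q_1\covered Q_2$ means (by the definition of $\covered$ on p.~\pageref{ex:RO(R)} area) that for every $q_1\in Q_1$ there is $q_2\in Q_2$ with $q_2\leq q_1$, we obtain $q_2\leq q_1\leq x$, hence $x\in\fil_{Q_2}$. The reverse inclusion follows by symmetry of $\sim$. Thus $\fil_{Q_1}=\fil_{Q_2}$.

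Surjectivity is immediate: by the definition of G-point in Section~\ref{sec:G-points-as-filters}, every $\frp\in\Grz$ is of the form $\fil_Q$ for some $Q\in\prePt$, so $f([Q]_{\sim})=\frp$.

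For injectivity, suppose $\fil_{Q_1}=\fil_{Q_2}$. Since each $q\in Q_i$ satisfies $q\leq q$, we have $Q_1\cup Q_2\subseteq\fil_{Q_1}=\fil_{Q_2}$. Applying~\eqref{eq:conditions-for-in-FQ} to any $q_1\in Q_1\subseteq\fil_{Q_2}$ yields some $q_2\in Q_2$ with $q_2\leq q_1$, so $Q_1\covered Q_2$; symmetrically $Q_2\covered Q_1$. Hence $Q_1\sim Q_2$ and $[Q_1]_{\sim}=[Q_2]_{\sim}$.

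There is really no main obstacle here: the argument is essentially a bookkeeping verification, and the only subtle point is to be careful with the direction of $\covered$ when matching it against the existential clause in~\eqref{eq:conditions-for-in-FQ}. The equivalence displayed immediately before the lemma already packages the key content, so the proof can be kept very short.
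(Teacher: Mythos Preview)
Your argument is correct, though you have the direction of $\covered$ flipped: by the paper's definition, $Q_1\covered Q_2$ means that for every $q_2\in Q_2$ there is $q_1\in Q_1$ with $q_1\leq q_2$, not the other way round. This is harmless here because $\sim$ packages both directions, so in each place where you invoke ``$Q_1\covered Q_2$'' you may simply use the opposite inclusion instead; but since you yourself flag the direction of $\covered$ as the only subtle point, it is worth getting right.

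Your route differs from the paper's in two respects. First, you check well-definedness explicitly, which the paper leaves implicit. Second, and more interestingly, for injectivity the paper argues by contrapositive via Lemma~\ref{lem:coi-separ-ext}: from $[Q_1]\neq[Q_2]$ it extracts regions $x\in Q_1$, $y\in Q_2$ with $x\ext y$, and then observes that a filter cannot contain two incompatible elements. Your argument is more elementary and self-contained: you read off the covering relation directly from~\eqref{eq:conditions-for-in-FQ}, avoiding any appeal to the contact structure or to the earlier lemma. The paper's approach makes the geometric content (separated representatives yield distinct points) more visible; yours shows that the bijection is really a formal fact about filters generated by coinitial chains.
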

\begin{proof}
If $[Q_1]\neq [Q_2]$, then $Q_1\ncovered Q_2$. Therefore, by Lemma~\ref{lem:coi-separ-ext}(i) there are $x\in Q_1$ and $y\in Q_2$ such that $x\ext y$. So $\fil_{Q_1}\neq\fil_{Q_2}$, since otherwise both $x$ and $y$ are in the same filter. Surjectivity is obvious, since every G-point is a filter $\fil_Q$, for some G-representative~$Q$.\end{proof}

\section{Whitehead points}\label{sec:W-points}

In this section, we present a~mathematical analysis of (a representative of) a point as formulated by \citet{Whitehead-PR}, and then used by \citet{Biacino-Gerla-CSGWDP}. Roughly, the idea is that Whitehead points are minimal elements of a poset of abstractive sets of a Boolean weak contact algebra.\footnote{More about the philosophy of and motivations for Whitehead points can be found in two excellent papers by \citet{Gerla-PFC} and \citet{Varzi-PHOC}.}

We begin with the crucial definition:

\begin{definition}\label{df:abstractive-set}
  A set of regions $A$ of a BWCA is an \emph{abstractive set} iff it satisfies \eqref{r0}, \eqref{r1} and:
  \begin{equation}\tag{A}\label{A}
    (\nexists x\in\frB)(\forall y\in A)\,x\leq y\,.
  \end{equation}
The class of all abstractive sets of a~given BWCA is denoted by `$\Abs$'. Since by \eqref{r1} and \eqref{eq:llcIngr} every abstractive set is a~chain w.r.t. $\leq$, it must be the case that for every $x\in A$ there is $y\in A$ such that $y<x$. So, by the Axiom of Dependent Choices, every abstractive set is infinite.
\end{definition}

The idea behind the definition is that we can abstract geometrical objects---like lines, segments, and points---from other entities. However, unlike representatives of Grzegorczyk's, these entities do not have to represent points  but might be planes, straight lines, line segments, triangles, and so on. To use a~simple example, we take the algebra $\RO(\Real^2)$ and the family of regular open sets of the form:
\[
\left\{\langle x,y\rangle\,\middle\vert\, y\in \left(-\nicefrac{1}{n},\nicefrac{1}{n}\right)\right\}\quad\text{for $n\in\omega\setminus\{0\}$}\,,
\]
which is an abstractive set that represents the straight line $y=0$ (i.e., some object from beyond the domain $\RO(\Real^2)$). Of course, we easily see that it is not a~G-representative, since regions:
\[
\{\langle x,y\rangle\mid x\geqslant 1\wedge y\in (-1,1)\}\quad\text{and}\quad \{\langle x,y\rangle\mid x\leqslant -1\wedge y\in (-1,1)\}
\]
overlap all regions from the abstractive set, but are not in~contact (in the sense of $\cont$ for the algebra $\RO(\Real^2)$). So the set violates~\eqref{r3}.

We use the same terminology and symbols for the \emph{covering} relation between abstractive sets that has been introduced for Grzegorczyk representatives. In particular, recall that $A$ and $B$ are coinitial in the case where $A$ is covered by $B$ and $B$ is covered by $A$.



Unlike in the case of covering relation on G-representatives, covering on abstractive sets does not have to be an equivalence relation since it is not---in general---symmetric. However, it is reflexive and transitive, so the coinitiality on abstractive sets is an equivalence relation. Following Whitehead, we will call every element of $\Abs/_{\mathord{\sim}}$ a \emph{geometrical element}. Given $A\in\Abs$ its equivalence class w.r.t. $\sim$ will be denoted by `$[A]$'. If $A_1,A_2\in\Abs$, define a binary relation on $\Abs/_{\mathord{\sim}}$:
\[
[A_1]\preceq[A_2]\iffdef A_1\covered A_2\,.
\]
The relation $\preceq$ is clearly a~partial order. Moreover,  $A_1\sim A_2$ and $B_1\sim B_2$ together entail that: $A_1\covered B_1$ iff $A_2\covered B_2$, thus $\preceq$ is well-defined.
\begin{definition}
For $A\in\Abs$, $[A]$ is a~\emph{Whitehead point} (\emph{W-point}) iff $[A]$ is minimal in $\langle\Abs/_{\mathord{\sim}},\mathord{\preceq}\rangle$. The set of all Whitehead points will be denoted by `$\Wthd$'. $A\in\Abs$ is a \emph{W-representative} of a~point iff $[A]\in\Wthd$. Let $\prePtW$ be the set of all W-representatives of a~given BWCA.
\end{definition}

Alternatively, for an abstractive set $A$ we have:
\begin{equation}\label{eq:alternative-W-representative}
  A\in\prePtW\iffslim\forall_{B\in\Abs}\,(B\covered A\rarrow A\covered B)\,.
\end{equation}

Recall that a BWCA is \emph{atomic} iff its underlying BA is atomic iff every non-zero region contains an atom (i.e., an element that is minimal w.r.t. the standard Boolean order). As an immediate consequence of the definition of an abstractive set we get the following:
\begin{corollary}
    If $\frB\in\BWCA$ is atomic, then $\frB$ does not have any abstractive sets, more so it does not have W-representatives.
\end{corollary}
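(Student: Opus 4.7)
To prove this, I would argue by contradiction: suppose $A$ is an abstractive set in an atomic $\frB$. By the observation immediately after Definition~\ref{df:abstractive-set}, $A$ is an infinite chain (w.r.t.\ $\leq$) with no minimum element, and condition~\eqref{A} says that no non-zero region of $\frB$ lies below every element of $A$. The plan is to use atomicity to manufacture such a non-zero lower bound, which will supply the desired contradiction.

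Fix any $x\in A$ and, by atomicity of $\frB$, choose an atom $a$ with $a\leq x$. I claim that $a$ is a lower bound of $A$, which since $a\neq\zero$ violates \eqref{A}. To verify the claim, take $y\in A$ and apply~\eqref{r1}: either $y=x$, $x\ll y$, or $y\ll x$. In the first two sub-cases \eqref{eq:llcIngr} yields $a\leq x\leq y$ immediately. The remaining sub-case, $y\ll x$ and hence $y\leq x$, is where atomicity has to do actual work: because $a$ is an atom and $a\cdot y\leq a$, either $a\cdot y=a$ (that is, $a\leq y$ and we are done) or $a\cdot y=\zero$, and the latter alternative is what must be excluded.

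The main obstacle is precisely the exclusion of the sub-case $a\cdot y=\zero$: a bare application of \eqref{r1} to the pair $(x,y)$ does not by itself forbid it. The plan to close this gap is to iterate, using the no-minimum property: if $a\cdot y=\zero$ then $y\leq x-a$, and every element of $A$ below $y$ (of which there are infinitely many, by the remark after Definition~\ref{df:abstractive-set}) is likewise confined to $x-a$; picking a fresh atom of $x-a$ and repeating the argument ought to produce---via \eqref{r1} applied across $A$---an atom that is forced to lie below a cofinal tail of $A$, hence below all of $A$, contradicting \eqref{A}. Since the paper advertises this corollary as an ``immediate consequence'' of the definition of an abstractive set, I expect the actual argument to be a good deal more compact, likely a one-line observation exploiting the fact that atoms have no proper non-zero parts, and I would streamline accordingly during write-up.
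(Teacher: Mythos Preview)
Your difficulty in the sub-case $a\cdot y=\zero$ is not an oversight to be polished away; it is a genuine obstruction, and the iterative repair you sketch does not work. Take $\frB=\langle\power(\omega),\mathord{\overl}\rangle$, an atomic BCA in which $\mathord{\con}=\mathord{\overl}$ and hence $\mathord{\ll}=\mathord{\leq}$. The family $A\defeq\{\omega\setminus n\mid n\in\omega\}$ satisfies \eqref{r0} and \eqref{r1}, and---under your reading of \eqref{A} as ``no non-zero lower bound''---also \eqref{A}, since $\bigcap A=\emptyset$. So $A$ is an abstractive set in an atomic BWCA. For every atom $\{k\}$ we have $\{k\}\nleq\omega\setminus(k{+}1)$, so no atom lies below any coinitial tail of $A$: your iteration manufactures fresh atoms indefinitely without ever producing one that bounds~$A$ from below.

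The paper supplies no proof of this corollary; it merely calls it an ``immediate consequence of the definition''. Note that \eqref{A} as literally written is never satisfied (put $x=\zero$); under the natural repair you adopt, the counterexample above applies, and the same $A$ also defeats the alternative reading ``$A$ has no least element''. Your instinct that the argument should collapse to a one-liner is therefore misplaced: the gap you located lies in the statement itself, not in your reasoning, and no streamlining will close it.
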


\subsection{W-representatives in regular open algebras}

If $\RO(X)$ is a~regular open algebra and $A$ is its W-representative, then of course $\bigwedge A=\zero$. However, this does not exclude the possibility in which $\bigcap A\neq\emptyset$, as $\bigwedge A=\Int\bigcap A$. Thus we may ask about set-theoretical intersections of abstractive sets.
\begin{lemma}
If $X$ is a topological space, and $\langle\RO(X),\mathord{\cont}\rangle$ is its topological contact algebra, then for every abstractive set $A\subseteq\RO(X)$, $\bigcap A$ is closed.  Therefore if $\bigcap A\neq\emptyset$ and $\bigcap A\in\RO(X)$, then the space $X$ is disconnected.
\end{lemma}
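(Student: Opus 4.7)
The plan is to prove both assertions at once by establishing the set-theoretic identity $\bigcap A=\bigcap_{u\in A}\Cl u$, which exhibits $\bigcap A$ as an intersection of closed sets and hence as closed; the disconnectedness conclusion will then follow once one additionally checks that $\bigcap A$ is a \emph{proper} subset of~$X$.

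The core input I would isolate first is the following auxiliary claim: for every $u\in A$ there exists $v\in A$ with $\Cl v\subseteq u$. To prove this, I would invoke the consequence of the abstractive-set definition noted explicitly after Definition~\ref{df:abstractive-set}, namely that every $u\in A$ strictly dominates some $v\in A$ in the $\leq$-order. Applying \eqref{r1} to such a pair, either $v\ll u$ or $u\ll v$, and \eqref{eq:llcIngr} combined with $v<u$ rules out the second possibility. Since the underlying algebra is $\RO(X)$ with the standard topological contact, the algebraic $\ll$ coincides with $\llt$, so $v\ll u$ translates directly into $\Cl v\subseteq u$.

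With this auxiliary fact in hand, $\bigcap A\subseteq\bigcap_{u\in A}\Cl u$ is immediate, and for the reverse inclusion I would argue: if $x\in\Cl w$ for every $w\in A$ and $u\in A$ is arbitrary, pick $v\in A$ with $\Cl v\subseteq u$ and conclude $x\in\Cl v\subseteq u$. For the second assertion it remains only to observe that $\bigcap A\neq X$: otherwise every $u\in A$ would equal $X$, forcing $A\subseteq\{X\}$ and contradicting the infiniteness of abstractive sets recorded in Definition~\ref{df:abstractive-set}. Hence under the stated hypotheses $\bigcap A$ is a non-empty, proper clopen subset of $X$, witnessing its disconnectedness. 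I do not anticipate any real obstacle here; the only minor care-point is cleanly recording the coincidence of $\ll$ and $\llt$ in $\RO(X)$, which is already implicit in the exposition preceding the lemma.
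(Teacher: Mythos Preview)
Your proposal is correct and follows essentially the same route as the paper: both arguments reduce to the identity $\bigcap A=\bigcap_{a\in A}\Cl a$, obtained from the fact that every $a\in A$ has some $b\in A$ with $\Cl b\subseteq a$ (the paper phrases this as ``$A$ and $\Cl[A]$ are coinitial'' and invokes \eqref{r2}, while you derive it from \eqref{r1} together with the strict-lower-bound remark after Definition~\ref{df:abstractive-set}). Your explicit verification that $\bigcap A\neq X$ via the infiniteness of abstractive sets makes the disconnectedness step slightly more transparent than the paper's terse appeal to $\bigwedge A=\Int\bigcap A$.
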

\begin{proof}
Fix an abstractive set $A$ whose elements are from a~$\langle\RO(X),\mathord{\cont}\rangle$. According to the characterization of $\llt$, $A$ and $\Cl[A]=\{\Cl a\mid a\in A\}$ are coinitial. Indeed, if $a\in A$, then by \eqref{r2} there is $b\in A$ such that $b\llt a$, i.e., $\Cl b\subseteq a$. So $A$ covers $\Cl[A]$. The other direction is obvious since $a\subseteq\Cl a$. Thus,  $\bigcap A=\bigcap\Cl[A]$, and in consequence $\bigcap A$ is closed in~$X$.

Since the infima in $\RO(X)$ are given by the interiors of the intersections, if $X$ is connected, $\bigcap A$ is never an element of the algebra if non-empty.
\end{proof}

This lemma gives rise to a~philosophical interpretation of abstractive sets. If the underlying regular algebra is composed of sets that are models of objects from the physical space (spatial bodies), it usually is a sub-algebra of $\RO(\Real^n)$, where $\Real^n$ is given the standard topology. Various choices are possible\footnote{See for example \citep{DelPiero-CFRUSCM,DelPiero-NCFR}, \citep{Lando-et-al-ACORRBMAT}.}, yet irrespective of these for no abstractive set $A\subseteq\RO(\Real^n)$, $\bigcap A\neq\emptyset$. In this sense abstractive sets represent objects from beyond the universe of models of spatial bodies, i.e., serve as abstraction processes to introduce objects that may be called \emph{geometrical}, \emph{ideal} or, precisely, \emph{abstract}. These objects are, of course, elements of the power set algebra of $\Real^n$, but the idea is that there are <<too many>> objects in $\power(\Real^n)$ from the perspective of the physical space, yet some of the elements of $\power(\Real^n)$ can be treated as approximations made via elements of subalgebras of the regular open algebra of~$\Real^n$.

The definition of a W-representative from the point of view of Euclidean spaces seems to be neat and grasp a certain way in which we may abstract points as higher-order objects. However, in the sequel, we will point to <<strange>> examples. But first, we prove that there are contact algebras that have W-points. Their existence stems from the following:
\begin{theorem}\label{th:local-basis-is-W}
    Let $\langle X,\topo\rangle$ be a topological space. If $A$ is an abstractive set in $\langle\RO(X),\mathord{\cont}\rangle$ that is at the same time a local basis at a~point $p\in X$, then $A$ is a W-representative.
\end{theorem}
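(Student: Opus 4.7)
My plan is to use the alternative characterization of W-representatives recorded in \eqref{eq:alternative-W-representative}: fixing an arbitrary $B \in \Abs$ with $B \covered A$, the task reduces to showing $A \covered B$, i.e., that for every $b \in B$ there is some $a \in A$ with $a \leq b$ (equivalently, $a \subseteq b$, since meets in $\RO(X)$ are set-theoretic intersections).

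The central intermediate claim I would establish is that $p \in b$ for every $b \in B$, and I would prove it in two stages. First, by contradiction, I will show that $p \in \Cl b$ for every $b \in B$: if $p \notin \Cl b$, then $X \setminus \Cl b$ is an open neighborhood of $p$, so the local-basis assumption delivers an $a \in A$ with $a \subseteq X \setminus \Cl b$, whence $a \cap b = \emptyset$. But $B \covered A$ supplies some $b' \in B$ with $b' \subseteq a$, so $b' \cap b = \emptyset$. This contradicts the fact that $B$ is a chain of non-$\zero$ regions with respect to $\leq$---a direct consequence of \eqref{r0}, \eqref{r1} and \eqref{eq:llcIngr}---because two comparable non-zero regions in $\RO(X)$ must intersect. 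To upgrade from $p \in \Cl b$ to $p \in b$, I would use that every abstractive set satisfies the analogue of \eqref{r2} noted in the remark following Definition~\ref{df:abstractive-set}: pick $b' \in B$ with $b' \ll b$; since $x \llt y$ is equivalent to $\Cl x \subseteq y$, we obtain $\Cl b' \subseteq b$, and combined with $p \in \Cl b'$ this yields $p \in b$.

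Once $p \in b$ is in hand for every $b \in B$, the local-basis hypothesis immediately produces an $a \in A$ with $a \subseteq b$, and the proof is complete. The main obstacle I foresee is precisely the upgrade from $p \in \Cl b$ to $p \in b$: the direct contradiction argument based on the local basis only places $p$ in the closure of $b$, whereas the final extraction of a member of $A$ inside $b$ requires $b$ itself to be an open neighborhood of $p$. It is the $\ll$-refinement available inside the abstractive set $B$ that bridges this gap; without it the argument would stall at closures, which is why the hypothesis that $A$ is already both abstractive and a local basis is exactly the right one.
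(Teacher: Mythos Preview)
Your proof is correct and follows essentially the same route as the paper's. The paper argues directly by contradiction that $p\in\bigcap B$: assuming $p\notin b_0$ for some $b_0\in B$, it first descends via $\ll$ to a $b_1\in B$ with $p\notin\Cl b_1$, then uses the local basis to find $a\in A$ disjoint from $b_1$, and obtains a contradiction with $B\covered A$ (implicitly using, as you do explicitly, that $B$ is a $\leq$-chain of non-zero regions). You simply reverse the order of these two moves---first using the local basis to secure $p\in\Cl b$ for all $b$, then applying the $\ll$-refinement to upgrade to $p\in b$---but the ingredients and logic are identical. One small remark: the observation you invoke, that every $b\in B$ admits $b'\in B$ with $b'\ll b$, is stated in the paper as Proposition~\ref{prop:A-r2s}(i) rather than in the remark after Definition~\ref{df:abstractive-set} (which only gives $b'<b$); the upgrade to $\ll$ uses \eqref{r1} as well.
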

    \begin{proof} Suppose $A$ is an abstractive set  in $\langle\RO(X),\mathord{\cont}\rangle$ and a local basis at~$p$. Let $B \subseteq \RO(X)$ be an abstractive set such that $B\covered A$. We show that $p\in\bigcap B$. Suppose otherwise, i.e., let $b_0 \in B$ be such that $p \notin b_0$. Since $B$ is an abstractive set it follows that there exists a $b_1 \in B$ such that $b_1 \llt b_0 $ i.e., $\Cl b_1\subseteq b_0$ and thus $p \notin \Cl  b_1$. Therefore, we have $p \in  X \setminus  \Cl  b_1$, where $X \setminus  \Cl  b_1$ is an open set in $\topo$. It follows that there exists an $a \in A$ such that $a \leq X\setminus  \Cl  b_1 $ and  $p \in a$. Hence,  $a \cdot b_1 = \zero$. In consequence, there exists no $b \in B$ such that $b \leq a$, which contradicts our initial assumption that $A$ covers $B$.

    Since  $p \in\bigcap B$ and $A$ is a~local basis at~$p$, we know that for every $b \in B $ there exists an $a \in A$ such that $a \subseteq b$, so $A$ must be covered by~$B$. Thus,  $A$ is a~W-representative.
\end{proof}
In consequence we have:
\begin{corollary}
    The real line with the standard topology has a W-representative at every point of the space.
\end{corollary}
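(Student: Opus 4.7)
The plan is a direct application of Theorem~\ref{th:local-basis-is-W}. Fix $p\in\Real$ and consider the family
\[
A_p\defeq\left\{\left(p-\tfrac{1}{n},\,p+\tfrac{1}{n}\right)\,\middle\vert\, n\in\omega\setminus\{0\}\right\}\,.
\]
Every element of $A_p$ is a non-empty open interval, hence regular open in $\Real$ with the standard topology, so $A_p\subseteq\RO(\Real)\setminus\{\zero\}$, which gives~\eqref{r0}.

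Next I would verify~\eqref{r1} and~\eqref{A}. For \eqref{r1}, note that for $m<n$ we have $\Cl(p-\tfrac{1}{n},p+\tfrac{1}{n})=[p-\tfrac{1}{n},p+\tfrac{1}{n}]\subseteq(p-\tfrac{1}{m},p+\tfrac{1}{m})$, so by the topological characterization $x\llt y\iffslim\Cl x\subseteq y$ we get $(p-\tfrac{1}{n},p+\tfrac{1}{n})\ll(p-\tfrac{1}{m},p+\tfrac{1}{m})$; thus any two distinct members of $A_p$ are comparable with respect to $\ll$. For \eqref{A}, any region below every element of $A_p$ must be contained in $\bigcap A_p=\{p\}$, and since regions of $\RO(\Real)$ are open and $\{p\}$ has empty interior, the only candidate is $\zero$, which is excluded by the definition of abstractive set. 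Hence $A_p\in\Abs$.

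It remains to observe that $A_p$ is a local basis at $p$ in $\langle\Real,\topo\rangle$: every open neighbourhood of $p$ contains some interval $(p-\varepsilon,p+\varepsilon)$ with $\varepsilon>0$, and picking $n$ with $\tfrac{1}{n}<\varepsilon$ yields an element of $A_p$ inside the neighbourhood. By Theorem~\ref{th:local-basis-is-W}, $A_p$ is then a W-representative of~$\Real$ at the point~$p$. Since $p$ was arbitrary, the real line has a W-representative at every point.

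I do not anticipate any real obstacle here: the work is just the verification that the standard countable descending basis of intervals around $p$ satisfies~\eqref{r0}, \eqref{r1} and~\eqref{A}. The only subtle bit is remembering that \eqref{A} must be checked against \emph{all} regions of $\RO(\Real)$ and not merely against points of $\Real$, but this is immediate from the fact that $\bigcap A_p=\{p\}$ has empty interior in the connected space~$\Real$.
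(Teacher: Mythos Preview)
Your proof is correct and follows exactly the approach implicit in the paper: the corollary is stated there without explicit proof, as an immediate consequence of Theorem~\ref{th:local-basis-is-W}, and you have simply spelled out the obvious verification that the standard nested basis $A_p=\{(p-\tfrac{1}{n},p+\tfrac{1}{n})\mid n\geq 1\}$ is an abstractive set and a local basis at~$p$. There is nothing to add.
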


\begin{definition}[\citealp{Davis-SLOLB}]
    A \emph{lob-space} is a~topological space that at every of its point has a~local basis linearly ordered by the subset relation.
\end{definition}
\begin{definition}[\citealp{Gruszczynski-NTP}]
    A topological space~$X$ is \emph{concentric} iff it is $T_1$ and at every $p\in X$ there is a local basis $\Basis^p$ such that:
    \begin{equation}\tag{R1}\label{eq:R1}
        (\forall U,V\in\Basis^p)\,(U=V\vee \Cl U\subseteq V\vee\Cl V\subseteq U)\,.\qedhere
    \end{equation}
\end{definition}
Thus, concentric spaces are those $T_1$-spaces whose all points have local bases that satisfy the topological version of \eqref{r1} condition for G-representatives. The theorem below demonstrates that these are a subclass of Davies's lob-spaces.
\begin{theorem}[\citealp{Gruszczynski-et-al-GPFBCA}]
    A topological space $X$ is concentric iff it is a~regular lob-space.
\end{theorem}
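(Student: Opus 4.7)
The plan is to prove the two implications separately; the forward direction is essentially direct, while the reverse requires a transfinite construction.

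For $(\Rightarrow)$, assume $X$ is concentric. Then $\Ti$ is part of the hypothesis. The condition \eqref{eq:R1} together with $U\subseteq\Cl U$ implies that $\Basis^p$ is $\subseteq$-linearly ordered, so the concentric basis at each point is itself a lob-basis. To verify regularity at $p$, given an open $U\ni p$, pick $V\in\Basis^p$ with $V\subseteq U$; if some $W\in\Basis^p$ lies strictly below $V$, then \eqref{eq:R1} forces $\Cl W\subseteq V\subseteq U$. Otherwise $V$ is the $\subseteq$-minimum of $\Basis^p$, so $V=\bigcap\Basis^p=\{p\}$ by the $\Ti$ hypothesis applied to the local basis; hence $\{p\}$ is open, $p$ is isolated, and regularity there is trivial.

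For $(\Leftarrow)$, assume $X$ is a regular lob-space and fix $p\in X$ with a lob-basis $\Basis^p$. If $p$ is isolated, the singleton family $\{\{p\}\}$ already satisfies \eqref{eq:R1} vacuously. Otherwise, let $\kappa$ be the coinitiality of $(\Basis^p,\supseteq)$, a regular cardinal, and pick a well-ordered coinitial subchain $\{W_\alpha\}_{\alpha<\kappa}$ with $W_\alpha\supsetneq W_\beta$ for $\alpha<\beta$. I construct a refinement $(V_\alpha)_{\alpha<\kappa}$ in $\Basis^p$ by transfinite recursion, maintaining the invariants (i) $V_\alpha\subseteq W_\alpha$, (ii) $V_\alpha\subseteq V_\beta$ for all $\beta<\alpha$, and (iii) $\Cl V_\alpha\subseteq V_\beta$ for all $\beta<\alpha$. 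At successor stages $\alpha=\gamma+1$, regularity furnishes an open $O$ with $\Cl O\subseteq V_\gamma$, and the lob property yields $V_\alpha\in\Basis^p$ inside $O\cap W_\alpha$; the three invariants then propagate using $V_\gamma\subseteq V_\beta$ for $\beta<\gamma$. The resulting family $\{V_\alpha\}_{\alpha<\kappa}$ is a $\subseteq$-linearly ordered local basis at $p$ satisfying \eqref{eq:R1} between any two of its members by virtue of (iii).

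The principal obstacle is the limit stage of the recursion. To build $V_\alpha$ one invokes the cardinal-theoretic fact that since $|\alpha|<\kappa$ equals the coinitiality of $(\Basis^p,\supseteq)$, the partial family $\{V_\beta\}_{\beta<\alpha}$ cannot itself be coinitial in $\Basis^p$; hence by $\subseteq$-linearity of the lob-basis there exists some $V\in\Basis^p$ with $V\subseteq V_\beta$ for every $\beta<\alpha$. If $V\subseteq W_\alpha$, apply regularity to $V$; otherwise $W_\alpha\subsetneq V$, whence $W_\alpha\subseteq V_\beta$ for all $\beta<\alpha$ as well, and one applies regularity to $W_\alpha$ instead. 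In either case, shrinking once more inside $\Basis^p$ produces $V_\alpha$ simultaneously satisfying (i)--(iii), completing the recursion.
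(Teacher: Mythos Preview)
The paper does not supply its own proof of this theorem; it merely cites the result from \citep{Gruszczynski-et-al-GPFBCA}. Consequently there is nothing in the present paper to compare your argument against.

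Assessed on its own merits, your proof is sound. The forward direction is handled correctly, including the edge case where the concentric basis has a minimum (forcing $p$ to be isolated via $T_1$). For the reverse direction, the transfinite recursion is well set up: the successor step is routine, and at limit stages you correctly exploit the regularity of the coinitiality cardinal $\kappa$ to argue that $\{V_\beta\}_{\beta<\alpha}$, having size $|\alpha|<\kappa$, cannot be coinitial in the linearly ordered $\Basis^p$; linearity then yields a common lower bound, and your case split on the comparison with $W_\alpha$ cleanly produces a set to which regularity applies. The invariants (i)--(iii) propagate as claimed, and the resulting family is coinitial with $\{W_\alpha\}$ and hence a local basis satisfying \eqref{eq:R1}.

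One small omission: in the reverse direction you never explicitly verify the $T_1$ clause of the definition of \emph{concentric}. This is harmless provided the paper's convention is that \emph{regular} entails $T_1$ (which the very statement of the theorem forces, since concentric is defined to include $T_1$); but it would be cleaner to say so.
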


\begin{theorem}\label{th:concentric-W-representatives}
    If $X$ is a concentric space whose regular open algebra is atomless, then at every point there is a local basis that is a W-representative.
\end{theorem}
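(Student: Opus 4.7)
The strategy is to refine a concentric local basis at $p$ into a family that sits inside $\RO(X)$ and forms an abstractive set, and then to invoke Theorem~\ref{th:local-basis-is-W}, according to which any abstractive set that is simultaneously a local basis at a point is automatically a W-representative. Fix $p\in X$ and a local basis $\mathscr{B}^p$ at $p$ witnessing \eqref{eq:R1}; since $U\subseteq\Cl U$ in general, condition \eqref{eq:R1} forces $\mathscr{B}^p$ to be a chain under $\subseteq$. I first observe that atomlessness of $\RO(X)$ forces $\{p\}$ to be non-open: were it open, $T_1$ would make it clopen, hence a non-empty minimal element of $\RO(X)$, i.e.~an atom. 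Because $X$ is $T_1$ and $\mathscr{B}^p$ is a local basis, $\bigcap\mathscr{B}^p=\{p\}$, and together with the chain structure this yields that $\mathscr{B}^p$ has no minimum (any minimum $U_0$ would satisfy $U_0=\bigcap\mathscr{B}^p=\{p\}$, contradicting the preceding sentence). Consequently, for each $V\in\mathscr{B}^p$ there exists $U\in\mathscr{B}^p$ with $\Cl U\subseteq V$: some $U\subsetneq V$ exists by the no-minimum property, and \eqref{eq:R1} rules out $\Cl V\subseteq U$.

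The refined family is $A\defeq\{\Int\Cl U\mid U\in\mathscr{B}^p\}\subseteq\RO(X)$. To see that $A$ is a local basis at $p$, take any open $O\ni p$, pick $V\in\mathscr{B}^p$ with $V\subseteq O$, and then $U\in\mathscr{B}^p$ with $\Cl U\subseteq V$; this gives $p\in U\subseteq\Int\Cl U\subseteq\Cl U\subseteq O$. Condition \eqref{r0} holds since $p\in\Int\Cl U$ for every $U\in\mathscr{B}^p$. Condition \eqref{r1} transfers from $\mathscr{B}^p$ via the inclusion $\Cl(\Int\Cl U)\subseteq\Cl U$: if $\Cl U_0\subseteq V_0$ in $\mathscr{B}^p$, then $\Cl(\Int\Cl U_0)\subseteq\Cl U_0\subseteq V_0\subseteq\Int\Cl V_0$ (the last step because $V_0$ is open), so $\Int\Cl U_0\ll\Int\Cl V_0$ in $\RO(X)$.

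It remains to verify \eqref{A}, which in the regular algebra $\RO(X)$ amounts to showing that $\bigcap A$ contains no non-zero regular open set. First, $\bigcap A=\{p\}$: for any $q\neq p$, $T_1$ supplies an open $O\ni p$ with $q\notin O$, and by the construction above there is $U\in\mathscr{B}^p$ with $\Cl U\subseteq O$, whence $q\notin\Int\Cl U$. Any non-empty regular open subset of $\{p\}$ would then equal $\{p\}$, which would make $\{p\}$ open, contradicting the opening observation. Thus $A$ is an abstractive local basis at $p$, and Theorem~\ref{th:local-basis-is-W} concludes. The main place where care is needed is the preservation of \eqref{r1} under the refinement $U\mapsto\Int\Cl U$; once the inclusion $\Cl(\Int\Cl U)\subseteq\Cl U$ is invoked, the rest is bookkeeping, with atomlessness entering precisely at the final step to rule out non-trivial regular open subsets of~$\{p\}$.
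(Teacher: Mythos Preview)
Your proof is correct and follows the same strategy as the paper: exhibit a local basis at $p$ that is an abstractive set in $\langle\RO(X),\cont\rangle$ and invoke Theorem~\ref{th:local-basis-is-W}. The paper's version is terser---it appeals to regularity (hence semi-regularity) of concentric spaces and asserts directly that the \eqref{eq:R1} basis is abstractive---whereas you make the passage to $\RO(X)$ explicit via the refinement $U\mapsto\Int\Cl U$ and spell out why atomlessness yields \eqref{A}; this extra care is warranted, since the definition of concentric does not by itself place $\mathscr{B}^p$ inside $\RO(X)$.
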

\begin{proof}
Since $X$ is regular, it is also semi-regular, so $\RO(X)$ is its basis, which is atomless by assumption. Therefore, the local basis at any point $p$ that satisfies \eqref{eq:R1} must be an abstractive set. So by Theorem~\ref{th:local-basis-is-W} the basis must be a~W-representative.
\end{proof}



Moreover, we have the following result regarding W-representatives, which shows that in the case of topological interpretation, they represent <<small>> chunks of the underlying space.

\begin{lemma}
If $X$ is a regular topological space, and $\langle\RO(X),\mathord{\cont}\rangle$ is its topological contact algebra, then for every W-representative  $A\subseteq\RO(X)$, $\bigcap A$ is a nowhere dense subset of $X$.\footnote{The observation and the proof that $\bigcap A$ is nowhere dense is due to \citet{Hart-APONOIOCCOROS}.}
\end{lemma}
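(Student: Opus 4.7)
The plan is to combine the previous lemma with a direct interpretation of the abstractive condition \eqref{A} inside the complete algebra $\RO(X)$. The previous lemma already delivers closedness of $\bigcap A$; what remains is to show $\Int\bigcap A=\emptyset$, and for this I plan to observe that in the complete Boolean algebra $\RO(X)$ condition \eqref{A} is equivalent to $\bigwedge A=\zero$, and that this meet is in turn given set-theoretically by $\Int\bigcap A$. Together these two facts are exactly the definition of nowhere-denseness.

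Concretely, first I would invoke the previous lemma to obtain that $\bigcap A=\bigcap_{a\in A}\Cl a$ is a closed subset of $X$. Then, to show $\Int\bigcap A=\emptyset$, I would establish the sub-claim that for any family $A\subseteq\RO(X)$ the meet in $\RO(X)$ is given by $\bigwedge A=\Int\bigcap A$. Letting $U\defeq\Int\bigcap A$, the set $U$ is open and contained in every $a\in A$; because each $a=\Int\Cl a$, one obtains $\Int\Cl U\subseteq\Int\Cl a=a$ for every $a\in A$, so $\Int\Cl U\subseteq\Int\bigcap A=U$, and $U$ is therefore regular open. Since any regular-open lower bound of $A$ must sit inside $U$, $U$ is the greatest lower bound of $A$ in $\RO(X)$. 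Condition \eqref{A}, which forbids a non-zero element of $\RO(X)$ below every element of $A$, then forces $U=\emptyset$.

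Combining the two steps, $\bigcap A$ is a closed set with empty interior, so $\Int\Cl(\bigcap A)=\Int\bigcap A=\emptyset$, which is precisely nowhere-denseness. The main---and essentially only---obstacle is the computation identifying $\bigwedge A$ with $\Int\bigcap A$ in the regular-open algebra; this is a standard fact but is the only step that requires any real work. I note, finally, that the argument uses neither the minimality of $A$ (as a W-representative) nor the regularity of $X$: it applies to every abstractive set in $\RO(X)$ over an arbitrary topological space~$X$, which suggests that the hypotheses of the lemma are stronger than strictly needed for the conclusion.
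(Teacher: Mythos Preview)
Your proof is correct and takes a genuinely different---and simpler---route from the paper's. The paper argues by contradiction using the \emph{minimality} of a W-representative: assuming $\Int\bigcap A\neq\emptyset$, it invokes regularity of $X$ to find a nonempty regular open $y$ with $\Cl y\subseteq\Int\bigcap A$, then uses regularity (and, somewhat informally, atomlessness) to build a strictly $\llt$-decreasing sequence $Y=\{y_n\mid n\in\omega\}$ below $y$; the set $A\cup Y$ is abstractive, covered by $A$, but does not cover $A$, contradicting that $A$ is a W-representative. Your argument instead uses only the abstractive condition \eqref{A}: you identify $\bigwedge A$ in $\RO(X)$ with $\Int\bigcap A$ (via the neat observation that this interior is itself regular open), so \eqref{A} forces $\Int\bigcap A=\emptyset$, and closedness from the previous lemma finishes the job.

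What your approach buys is exactly what you note at the end: the conclusion holds for every abstractive set in $\RO(X)$ over an arbitrary space~$X$, with no appeal to regularity or to the W-minimality of~$A$. The paper's approach, by contrast, genuinely consumes both hypotheses (and tacitly also atomlessness of the relevant part of $\RO(X)$ in order to build the descending sequence~$Y$). So your proof is not only correct but strictly more informative about the actual strength of the hypotheses.
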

\begin{proof}
Assume  that $x\defeq\bigcap A$ has a~non-empty interior. Therefore, there is a~non-empty regular open set $y$ such that $\Cl y\subseteq\Int x$. This in particular means that $y\llt a$, for all $a\in A$. Since the space is regular and the algebra atomless, we can construct a~sequence such that $y_0\defeq y$ and $y_{n+1}\llt y_n$. In consequence for $Y\defeq\{y_n\mid n\in\omega\}$ we have that $A$ covers $A\cup Y$ but not vice versa, since no element of $Y$ contains an element of $A$. So $A$ is not a W-representative.
\end{proof}

What is common for all W-representatives whose existence follows from the above result is that although they do not have infima as subsets for regular open algebras, they do have a non-empty intersection that is precisely the point of the space that they represent as a basis. This raises the question  whether there is a~regular open algebra that has a W-representative whose set-theoretical intersection is non-empty and that may be interpreted as a~<<new>> point, i.e., something similar to a free ultrafilter being treated as a point of a topological space. The answer is positive, and the example is due to Klaas Pieter Hart.

\begin{example}[\citealp{Hart-APONOIOCCOROS}]\label{ex:Hart}
Consider the ordinal space $X\defeq[0,\omega_1)$, where $\omega_1$ is the first uncountable ordinal. Recall that if $x$ and $y$ are closed and unbounded subsets of $X$, then $x\cap y\neq\emptyset$. Due to this, for any open subsets $x$ and $y$ of $X$, if $\Cl x\subseteq y$, then either $\Cl x$ is compact or $y$ contains an interval $[\alpha+1,\omega_1)$, for some $\alpha<\omega_1$. For if $\Cl x$ is not compact, then it must be unbounded, and since $\Cl x\cap\complement y=\emptyset$,  $\complement y$ is bounded, i.e., there is $\alpha<\omega_1$ such that $\complement y\subseteq[0,\alpha]$. Therefore $[\alpha+1,\omega_1)\subseteq y$. The following set $A\defeq\{[\alpha+1,\omega_1)\mid\alpha<\omega_1\}$ consists of clopen---and the more so regular open---subsets of $X$, and is an abstractive set. If $B$ is also an abstractive set such that $A$ covers $B$, then every element $b\in B$ must be unbounded. The more so the closure of every element of $B$ is unbounded, and since for every $b\in B$ there is $b_0$ in $B$ such that $\Cl b_0\subseteq b$, $b$ must contain an interval $[\alpha+1,\omega)$. Thus $B$ covers $A$, and so $A$ is a W-representative in $\RO(X)$. Of course, $\bigcap A=\emptyset$, and the W-point $[A]$ represents the ordinal $\omega_1$ that is absent from $X$.
\end{example}

This example is quite important from the point of view of the hidden assumptions behind Whitehead points.  \citet[p.\,30]{Bostock-WAROP} writes that:
\begin{quote}
[\ldots] Whitehead's construction [\ldots] does actually have the idea of boundedness built into it: only a bounded nest\footnote{A nest is the counterpart of an abstractive set, it is bounded if it contains only bounded regions (actually it is enough that it contains one such region to be considered bounded). } can satisfy Whitehead's definition of a point-nest. (But I do not suppose that Whitehead recognised this.)
\end{quote}
What the example shows then is that boundedness is not built into the idea of Whitehead points. It only is as far as <<natural>> spaces---like Euclidean spaces---are considered, which follows from further results and properties of Grzegorczyk points proven in \citep{Gruszczynski-NTP,Gruszczynski-et-al-ASGPFT1,Gruszczynski-et-al-ASGPFT2}. In an abstract setting, relevant for this paper, the notion of \emph{boundedness} does not have to be considered, as \citeauthor{Hart-APONOIOCCOROS}'s example shows. Also, this example shows that the \emph{connectedness} of regions that constitute W-representatives is not built into the general idea of points in the sense of Whitehead, as every region in the W-representative from the example is topologically disconnected.\footnote{The issue of connectedness of regions in Whitehead's theory is elaborately discussed in \citep{Bostock-WAROP}.}

Let us make another philosophical remark at this point. It may also be the case that we do not know whether a particular subset of a regular open algebra is a W-representative  due to our current state of knowledge. Consider the following example.

\begin{example} Let $\RO(\Real)$ be the complete algebra of  regular open subsets of $\Real$ and $\langle\RO(\Real),\mathord{\cont}\rangle$ its topological contact algebra. Then, consider the following set.
\[
A\defeq\{(-\nicefrac{1}{p}, \nicefrac{1}{p}) \mid  p\defeq \mbox{ max}\{s,t\} \mbox{ where } s,t \mbox{ are  twin primes} \}\,
\]
The twin prime conjecture, i.e., the claim that there exist infinitely many twin primes, is still an unsolved problem within number theory. So we do not know whether $A$ is finite or infinite. This implies that we also do not know  whether $A$ is an abstractive set and thus  a W-representative.\end{example}

Observe that there are BCAs without any W-representatives and, therefore, without any Whitehead points.

\begin{definition}
  An ordinal number $\alpha$ is \emph{even} iff there is an ordinal number $\beta$ such that $\alpha=2\cdot\beta$ (where $\cdot$ is the standard ordinal multiplication). Otherwise, it is \emph{odd}. Let $\Even_\kappa$ and $\Odd_\kappa$ be, respectively, the set of all even and odd ordinals smaller than~$\kappa$.
\end{definition}

\begin{lemma}\label{lem:no-W-points-atomless-C=O} No complete $\frB \in \BCA$ in which $\mathord{\con}=\mathord{\overl}$ has W-representatives.\footnote{If, additionally, $\frB$ is atomless, then it does not have any G-representative either, which is entailed by Theorem~\ref{th:Q-and-A-is-W} proven further.}
\end{lemma}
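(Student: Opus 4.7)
The plan is, for any $A\in\Abs$ of $\frB$, to construct an abstractive set $B$ with $B\covered A$ but $A\ncovered B$; this makes $[A]$ strictly $\preceq$-above $[B]$, so $[A]$ fails to be minimal. The decisive simplification is that when $\con=\overl$, the non-tangential inclusion collapses to the Boolean order: $x\ll y\iff x\separ-y\iff x\cdot-y=\zero\iff x\leq y$. Hence \eqref{r1} just says $A$ is a $\leq$-chain, and \eqref{A} together with completeness forces $\bigwedge A=\zero$; in particular $A$ has no minimum.

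First I would well-order $A$ in strictly decreasing $\leq$-order as $(a_\alpha)_{\alpha<\lambda}$ for a limit ordinal $\lambda$. Setting the \emph{jumps} $d_\alpha\defeq a_\alpha\cdot-a_{\alpha+1}$ gives a family of non-zero (since $a_{\alpha+1}<a_\alpha$), pairwise disjoint regions indexed by $\lambda$. Partition $\lambda$ by parity into $E\defeq\{\alpha<\lambda:\alpha\text{ even}\}$ and $O\defeq\{\alpha<\lambda:\alpha\text{ odd}\}$; because $\lambda$ is a limit ordinal, both $E$ and $O$ are cofinal in $\lambda$. Using completeness, define $c\defeq\bigvee_{\alpha\in E}d_\alpha$ and $c'\defeq\bigvee_{\alpha\in O}d_\alpha$, noting that $c\cdot c'=\zero$ by pairwise disjointness. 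The candidate is $B\defeq\{c\cdot a_\beta:\beta<\lambda\}$.

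The verification turns on a single computation using distributivity of $\cdot$ over arbitrary $\bigvee$ (which holds in every complete Boolean algebra). Because $d_\alpha\cdot a_\beta=d_\alpha$ for $\alpha\geq\beta$ (as $a_\alpha\leq a_\beta$) and $d_\alpha\cdot a_\beta=\zero$ for $\alpha<\beta$ (as $a_\beta\leq a_{\alpha+1}$), one obtains $c\cdot a_\beta=\bigvee_{\alpha\in E,\,\alpha\geq\beta}d_\alpha\neq\zero$ by cofinality of $E$; symmetrically $c'\cdot a_\beta\neq\zero$. So $B$ satisfies \eqref{r0}, is a chain hence satisfies \eqref{r1}, and $\bigwedge B=c\cdot\bigwedge A=\zero$ yields \eqref{A}. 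The inclusion $c\cdot a_\beta\leq a_\beta$ shows $B\covered A$. Conversely, if some $a_\gamma\in A$ were below an element $c\cdot a_\beta$ of $B$ then $a_\gamma\leq c$, whence $a_\gamma\cdot c'\leq c\cdot c'=\zero$, contradicting $a_\gamma\cdot c'\neq\zero$; so $A\ncovered B$.

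The only potentially thorny point is the uncountable case: extracting only a countable cofinal descent from $A$ would fail when the order type of $A$ has uncountable cofinality, since the partition $E\cup O$ of $\omega$ would leave a tail of $A$ (below the meet of the subsequence) on which both $c$ and $c'$ vanish. The parity partition of the \emph{whole} ordinal $\lambda$ sidesteps this uniformly: for every limit $\lambda$ and every $\beta<\lambda$ the interval $[\beta,\lambda)$ contains both even and odd ordinals, so a single argument handles chains of every cardinality and cofinality at once.
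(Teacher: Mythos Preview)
Your argument is correct and is essentially the paper's proof: enumerate the abstractive set by a limit ordinal, form the successive differences $d_\alpha=a_\alpha-a_{\alpha+1}$, split them by ordinal parity, and use the tail-joins of one parity class (your $c\cdot a_\beta=\bigvee_{\alpha\in E,\,\alpha\geq\beta}d_\alpha$ is exactly the paper's $a_\beta$) to obtain an abstractive set covered by $A$ but not covering it. The only step to tighten---which the paper also glosses over---is that an arbitrary abstractive set need not itself be reverse-well-ordered, so ``well-order $A$ in strictly decreasing $\leq$-order'' should be read as ``pass to a coinitial subset indexed by a limit ordinal'', which is harmless since that subset is $\sim$-equivalent to $A$.
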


\begin{proof}
If $\frB$ is finite, then it cannot have any abstractive sets. The more so it cannot have W-representatives.

So suppose $\frB$ is infinite, and let $\langle x_\alpha\mid\alpha<\kappa\rangle$ be an abstractive set, for some limit cardinal $\kappa$. Since we consider the case in which contact is overlap, we have that:
\[
x_0 > x_1>\ldots>x_n>x_{n+1}>\ldots>x_{\beta}>x_{\beta+1}>\ldots
\]
is an abstractive set. For any $\alpha<\kappa$ define: $y_\alpha\defeq x_\alpha-x_{\alpha+1}$ and consider the antichain $\langle y_\alpha\mid \alpha<\kappa\rangle$. Let $\Odd_\kappa$ and $\Even_\kappa$ be, respectively, all odd and all even ordinals smaller than $\kappa$. Divide the sequence into two sub-sequences:
\[
\langle u_\alpha\mid\alpha\in\Odd_\kappa\rangle\quad\text{and}\quad\langle v_\alpha\mid\alpha\in\Even_\kappa\rangle\,,
\]
take the following suprema:
\[
a_\beta\defeq\bigvee\{u_\alpha\mid\alpha\in\Odd_\kappa\setminus (\Odd_\kappa\cap(2\cdot\beta+1))\}\,,
\]
and gather them into $A\defeq\{a_\beta\mid\beta<\kappa\}$. $A$ is an abstractive set covered by $\langle x_\alpha\mid\alpha<\kappa\rangle$, but does not cover this sequence. Therefore the sequence is not a W-representative. In consequence, no abstractive set is a~Whitehead representative.
\end{proof}

Let us round off this section with the following remarks.\label{page:purely-mereological} Lemma~\ref{lem:no-W-points-atomless-C=O} is a mathematical embodiment of what \cite{Whitehead-CN} discovered himself: the purely mereological notion of \emph{parthood} is too weak to represent his concept of \emph{point} as a~collection of regions. Some arguments for this can be found in Whitehead's book, \citep{Bostock-WAROP} and \citep{Varzi-PHOC}. However, their common weaknesses are that (a) they refer to particular kind of regions that invoke the notions of \emph{dimension} and of \emph{shape} (either explicitly or implicitly) and (b) they do not single out precise assumptions. These are arguments, not proofs in a strict mathematical sense. We present a fully-fledged proof, which is general in the sense that we consider regions as abstract elements of any Boolean algebra. What remains to be eliminated is the assumption of completeness. Thus, we put forward the following open problem:
\begin{problem}
Is there an incomplete BCA in which both $\mathord{\con}=\mathord{\overl}$ and there exists a W-representative?
\end{problem}
Observe that Lemma~\ref{lem:no-W-points-atomless-C=O} does not exclude such algebras, as the property of being a W-representative does not have to be preserved for completions of BAs. That is, if $\frB$ is an incomplete BA that has a W-representative~$A$, and $\cl{\frB}$ is the completion of $\frB$, then the structure of~$A$ is preserved by the canonical embedding $e\colon\frB\to\cl{\frB}$. In consequence, we can repeat the reasoning from Lemma~\ref{lem:no-W-points-atomless-C=O} and show that $e[A]$ is not a W-representative.

\section{G-representatives are W-representatives (under additional assumptions)}\label{sec:G-points-are-W-points}

In this section, we are occupied with two problems: (a) what are the minimal conditions for BWCAs that guarantee that every G-representative is a W-representative, and (b) what is the content of the second order monadic statement about the dependency between the two sets of representatives. Theorem \ref{th:Q-and-A-is-W} below is a stronger version of \cite[Theorem 5.1]{Biacino-Gerla-CSGWDP}. \citeauthor{Biacino-Gerla-CSGWDP}'s proof to establish that every G-representative is a W-representative uses the second-order constraints that postulate the existence of Grzegorczyk points. These are their axioms $\mathrm{G}_4$ and $\mathrm{G}_5$, closely related to the original Grzegorczyk axiom from his paper.\footnote{See \citep{Gruszczynski-et-al-ACOTSOPFT} for a comparison of the original axiomatization of \citeauthor{Grzegorczyk-AGWP}'s system with the system of \citeauthor{Biacino-Gerla-CSGWDP}'s.} We prove that the original result of the Italian mathematicians can be substantially improved, as we only assume the axioms for the weak contact relation plus:
\begin{gather*}
(\forall x\neq\zero)(\exists y\neq\zero)\,y\ll x\,,\tag{C5}\label{C5}
\end{gather*}
and the atomlessness of the underlying Boolean algebra. \eqref{C5}  is known as the \emph{non-tangential part} axiom, and  it is equivalent---by \eqref{df:ll}---to:
\begin{equation*}
(\forall x\neq\one)(\exists y\neq\zero)\,x\notcon y\,,
\end{equation*}
the so-called \emph{disconnection} axiom.
In the class $\BWCA$ both these axioms are equivalent to the \emph{extensionality} axiom:
\begin{equation*}
    (\forall z\in B)\,(z\con x \iffslim z\con y)\rarrow x=y\,.
\end{equation*}

Moreover, we show that in the class $\BWCA+\eqref{C5}$ the second-order monadic statement `every G-representative is a W-representative' is equivalent to the first order condition `there are no atoms'. Additionally, in Theorem~\ref{th:independence-no-atoms-G-are-W} we demonstrate that both the implications fail if we omit the axiom~\eqref{C5}. Thus, via the two theorems, we provide answers to both (a) and (b) above.

The first requirement that G-representatives must meet to be W-representatives is that they are abstractive sets. In general, it does not have to be true: there are contact algebras with G-representatives that are not abstractive sets since the former do not have to satisfy~\eqref{A}. \citeauthor{Biacino-Gerla-CSGWDP} do not have to take this into account since their definition of G-representative contains the requirement that it is a set of regions without the minimal element. This, however, is the assumption that is absent from the definition introduced in the original paper by \citeauthor{Grzegorczyk-AGWP}.

In connection with this we have:
\begin{proposition}[\citealp{Gruszczynski-Pietruszczak-GSPFT}]\label{prop:atom-in-Q}
If $\frB\in\BWCA+\eqref{C5}$ and $\frB$ has an atom~$a$, then $\{a\}\in\prePt$.
\end{proposition}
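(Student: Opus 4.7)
The plan is to verify directly that $Q\defeq\{a\}$ satisfies the four conditions \eqref{r0}--\eqref{r3} defining a G-representative. Three of them are essentially immediate, and the content of the statement sits in condition \eqref{r2}, where axiom \eqref{C5} is essential.

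For \eqref{r0} I would just observe that atoms are non-zero by definition, so $\zero\notin\{a\}$. Condition \eqref{r1} is vacuously satisfied because a singleton trivially fulfils the disjunction with $u=v=a$. The interesting case is \eqref{r2}: I need to exhibit $v\in\{a\}$ with $v\ll a$, i.e., to show $a\ll a$. By \eqref{C5} applied to $x=a\neq\zero$, there exists $y\neq\zero$ such that $y\ll a$. Since $y\ll a$ gives $y\leq a$ via \eqref{eq:llcIngr}, and since $a$ is an atom, $y$ must be either $\zero$ or $a$; by the choice of $y$, it is $a$, and hence $a\ll a$.

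For \eqref{r3}, let $x,y\in\frB$ be such that $a\overl x$ and $a\overl y$. Because $a$ is an atom, $a\cdot x\neq\zero$ forces $a\cdot x=a$, i.e., $a\leq x$; analogously $a\leq y$. Then \eqref{C1} applied to $a\leq x$ (with $a\neq\zero$) yields $a\con x$, and by symmetry \eqref{C2} we have $x\con a$. Finally, \eqref{C3} applied to $a\leq y$ turns $x\con a$ into $x\con y$, which is exactly what \eqref{r3} demands.

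The only step that draws on anything beyond \eqref{C0}--\eqref{C3} is \eqref{r2}, and that is the reason \eqref{C5} appears in the hypothesis; without it an atom need not be a non-tangential part of itself, and the argument breaks down. All the remaining work is a one-line manipulation of the contact axioms, so I do not anticipate any real obstacle.
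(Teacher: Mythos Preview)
Your proof is correct and follows essentially the same approach as the paper: both use \eqref{C5} together with the atom property to obtain $a\ll a$ for \eqref{r2}, and both derive \eqref{r3} from the fact that an atom overlapping a region must be below it. The only cosmetic difference is that the paper concludes \eqref{r3} by noting $a\leq x$ and $a\leq y$ yield $x\overl y$ (and hence $x\con y$ since $\mathord{\overl}\subseteq\mathord{\con}$), whereas you route through \eqref{C1}--\eqref{C3} explicitly; both arguments are equally valid.
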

\begin{proof}
Fix an atom~$a$. By \eqref{C5} there exists a~non-zero $b\in B$ such that $b\ll a$. So $b\leq a$ by \eqref{eq:llcIngr}, and thus $b=a$. From this, we can see that the conditions \eqref{r0}--\eqref{r2} are satisfied. For \eqref{r3}, if $x\overl a$ and $y\overl a$, then $a\leq x$ and $a\leq y$, so $x\overl y$.
\end{proof}
So, if a~BWCA has atoms and satisfies \eqref{C5} there are G-representatives, which are not abstractive sets. Thus, in general, it is not the case that $\prePt\subseteq\Abs$, and the natural thought is to eliminate the existence of atoms, especially due to the fact that G-points generated by atoms are---in a way---not very interesting, similarly as are not principal ultrafilters.

Before we do this, we prove a~propoisition that will help us establish the main results of this section.

\begin{proposition}\label{prop:A-r2s}
Let $\frB\in\BWCA$.
\begin{enumerate}[label=(\arabic*)]
\item[(i)] If $A\in\Abs$, then $A$ satisfies the strong version of~\eqref{r2}\/\textup{:}
  \begin{equation}\tag{r2${}^{\mathrm{s}}$}\label{r2s}
  (\forall x\in A)(\exists y\in A)\,(y\ll x\wedge y\neq x)\,.
  \end{equation}
\item[(ii)] If $X$ is a set of regions such that $X\covered Q$ for some G-representative $Q$, then $X$ satisfies \eqref{r3}.
\item[(iii)] \label{cor:A-coi-Q-is-G-rep} If $A\in\Abs$, $Q\in\prePt$ and $A\covered Q$, then $A\in\prePt$ and $[A]_{\mathord{\sim}}=[Q]_{\mathord{\sim}}$.
\end{enumerate}
\end{proposition}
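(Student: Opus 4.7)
My plan is to prove the three clauses in order, since (iii) rests on both (i) and (ii).

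For (i), I would fix $x\in A$ and feed $x$ itself into condition \eqref{A}: since no region is below every element of $A$, in particular $x$ itself is not, so there exists $z\in A$ with $x\not\leq z$. Now \eqref{r1} leaves only three cases for the pair $\{x,z\}$, namely $z=x$, $x\ll z$, or $z\ll x$; the first two yield $x\leq z$ (trivially, and via \eqref{eq:llcIngr} respectively) and are ruled out by the choice of $z$. Hence $z\ll x$, and $z\neq x$ is automatic from $x\not\leq z$ together with the reflexivity of $\leq$. This $z$ witnesses \eqref{r2s}.

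For (ii), I would verify \eqref{r3} for $X$ by reduction to \eqref{r3} for $Q$. Assuming $(\forall u\in X)(u\overl x\wedge u\overl y)$, the task is to derive the analogous statement with $Q$ in place of $X$. Given $v\in Q$, the hypothesis $X\covered Q$ supplies $u\in X$ with $u\leq v$; monotonicity of meet lifts $u\overl x$ to $v\overl x$ via $u\cdot x\leq v\cdot x$, and similarly for $y$. Then \eqref{r3} applied to $Q$ delivers $x\con y$, which is precisely what \eqref{r3} for $X$ demands.

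For (iii), the work is essentially done: $A$ inherits \eqref{r0} and \eqref{r1} as an abstractive set, satisfies \eqref{r2} as a trivial weakening of the \eqref{r2s} established in (i), and satisfies \eqref{r3} by (ii) applied with $X:=A$; hence $A\in\prePt$. The class equality $[A]_{\sim}=[Q]_{\sim}$ amounts to coinitiality: I have $A\covered Q$ by hypothesis, and the converse $Q\covered A$ is delivered by Corollary~\ref{cor:coi-equivalence} now that both $A$ and $Q$ are known to be G-representatives. I do not anticipate any serious obstacle; the only spot that calls for actual thought, rather than unpacking definitions, is noticing in (i) that \eqref{A} must be instantiated at $x$ itself in order to produce a strict $\ll$-predecessor of $x$ inside $A$. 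The rest is bookkeeping.
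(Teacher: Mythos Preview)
Your proposal is correct and follows essentially the same route as the paper: in (i) you instantiate \eqref{A} at $x$ and use \eqref{r1} with \eqref{eq:llcIngr} just as the paper does (the paper phrases this via the pre-noted fact that every $x\in A$ has some $y<x$ in $A$); in (ii) you reduce \eqref{r3} for $X$ to \eqref{r3} for $Q$ via the covering hypothesis and monotonicity, which is exactly the paper's argument; and in (iii) both you and the paper assemble the pieces and invoke Corollary~\ref{cor:coi-equivalence}. Your appeal in (ii) to monotonicity of meet is in fact slightly cleaner than the paper's citation of \eqref{C3}, since the hypothesis of \eqref{r3} involves $\overl$ rather than $\con$.
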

\begin{proof}
(i) For every $x\in A$ there is a $y\in A$ such that $y<x$. But, by \eqref{r1}, either $x\ll y$ or $y\ll x$. Since the former cannot hold in light of \eqref{eq:llcIngr}, we have the latter.

\smallskip

(ii) Assume that for all $x\in X$, $x\con u$ and $x\con v$. If $q\in Q$, then by covering of $X$ by $Q$~and by \eqref{C3} we have that $q\con u$ and $q\con v$. Therefore $u\con v$, by \eqref{r3} for~$Q$.

\smallskip

(iii)  Follows from the previous items (i), (ii) and Corollary~\ref{cor:coi-equivalence}.
\end{proof}

\begin{proposition}\label{prop:separated-parts} If $\frB\in\BWCA+\eqref{C5}+\eqref{eq:no-atoms}$, then every non-zero region of $B$ has two proper parts that are separated from each other.
\end{proposition}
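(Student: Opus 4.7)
The plan is to build the two separated parts from an arbitrary non-zero region $x$ by combining atomlessness with \eqref{C5} and then invoking \eqref{C3} to upgrade the Boolean disjointness into separation.

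First, using \eqref{eq:no-atoms} applied to $x$, pick $x_1$ with $\zero < x_1 < x$, and set $x_2 \defeq x - x_1$. A quick Boolean computation shows $x_2 \neq \zero$ (otherwise $x\leq x_1$, contradicting $x_1<x$) and $x_2 < x$, and by construction $x_1\cdot x_2 = \zero$, hence $x_2 \leq -x_1$. So $x$ already has two non-zero proper parts, but they are merely disjoint; disjointness need not imply separation in a BWCA, which is where \eqref{C5} enters.

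Next, apply \eqref{C5} to $x_1$ to obtain a non-zero $y$ with $y \ll x_1$, which by \eqref{df:ll} means $y \separ -x_1$. Since $x_2 \leq -x_1$, the contrapositive form of \eqref{C3} yields $y \separ x_2$. Finally, $y \leq x_1 < x$ gives $y < x$ (if $y=x$, then $x\leq x_1$, contradicting $x_1<x$), so $y$ and $x_2$ are the desired pair of non-zero proper parts of $x$ with $y \separ x_2$.

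I do not foresee a real obstacle: every step uses only the hypotheses available, namely \eqref{C5} to produce a non-tangential inner part, \eqref{eq:no-atoms} to split $x$ into two non-zero pieces, and \eqref{C3} to transport separation from $-x_1$ down to $x_2$. The only point that requires a moment of care is verifying that the two parts are indeed \emph{proper} (not just $\leq x$), which is handled by the observation $y \leq x_1 < x$ and $x_2 = x - x_1 \neq x$.
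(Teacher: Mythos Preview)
Your proof is correct and follows essentially the same approach as the paper: split $x$ into two disjoint non-zero proper parts via atomlessness, apply \eqref{C5} to one of them to obtain a non-tangential part, and then use monotonicity of contact to conclude separation. The only cosmetic difference is that you apply \eqref{C5} to the first piece $x_1$ while the paper applies it to the complementary piece; you are also more explicit about invoking \eqref{C3} and about verifying that the resulting parts are \emph{proper}.
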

\begin{proof}
Let $x$ be a non-zero element of $\frB$. Since the algebra has no atoms, there is a non-zero $y$ that is a proper part of $x$. So, by the Boolean axioms, there is another non-zero $z<x$ that is incompatible with $y$. But by \eqref{C5}, $z$ must have a~non-tangential part~$z_0$. So $z_0\notcon y$, and both regions are parts of~$x$.
\end{proof}

\begin{theorem}\label{th:Q-and-A-is-W}
If $\frB\in\BWCA+\eqref{C5}$, then $\frB$ is atomless iff in $\frB$ every G-representative is a W-representative.
\end{theorem}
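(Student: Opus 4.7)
The plan is to prove both directions separately; atomlessness and \eqref{C5} enter in markedly different ways, and most of the substantive work is already packaged in earlier results of the paper.

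For the reverse direction $(\Leftarrow)$, I would argue contrapositively. If $\frB$ has an atom $a$, then Proposition~\ref{prop:atom-in-Q} yields $\{a\} \in \prePt$. However, the remark immediately following Definition~\ref{df:abstractive-set} records that every abstractive set is infinite---each of its elements strictly dominates another in the set---so the singleton $\{a\}$ cannot lie in $\Abs$, and in particular not in $\prePtW \subseteq \Abs$. This exhibits a G-representative that fails to be a W-representative.

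For the forward direction $(\Rightarrow)$, fix a G-representative $Q$ in an atomless $\frB$. The proof splits in two parts. First, I would show $Q \in \Abs$: conditions \eqref{r0} and \eqref{r1} come for free from the definition of a G-representative, so only \eqref{A} requires work. Assume for contradiction that some non-zero $x$ lies below every $y \in Q$. Proposition~\ref{prop:separated-parts}---where both atomlessness and \eqref{C5} are crucially used---produces proper parts $x_1, x_2$ of $x$ with $x_1 \notcon x_2$. Since $x_1, x_2 \leq x \leq y$ for every $y \in Q$, both $x_1$ and $x_2$ overlap every member of $Q$, and then \eqref{r3} applied to $Q$ forces $x_1 \con x_2$, a contradiction. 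Second, I would verify the W-representative condition via the reformulation \eqref{eq:alternative-W-representative}: given an arbitrary $B \in \Abs$ with $B \covered Q$, Proposition~\ref{prop:A-r2s}(iii) immediately gives $B \in \prePt$ and $[B]_{\mathord{\sim}} = [Q]_{\mathord{\sim}}$, which unpacks to $Q \covered B$, as required.

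The main obstacle is the first half of the forward direction, where the conceptual content of the theorem resides. The trick is to play the contact-geometric constraint \eqref{r3} against the separation of proper parts guaranteed by atomlessness plus \eqref{C5}; once this single incompatibility is spotted, the passage from ``G-representative that is also abstractive'' to ``W-representative'' is a formal consequence of the machinery assembled in Proposition~\ref{prop:A-r2s} and Corollary~\ref{cor:coi-equivalence}.
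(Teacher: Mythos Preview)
Your proposal is correct and follows essentially the same route as the paper's proof: both directions invoke exactly the same earlier results (Proposition~\ref{prop:atom-in-Q} for the contrapositive of $(\Leftarrow)$, Proposition~\ref{prop:separated-parts} to get $Q\in\Abs$, and Proposition~\ref{prop:A-r2s}(iii) for minimality). The only difference is that you spell out explicitly the contradiction with \eqref{r3} that the paper leaves implicit in the phrase ``by Proposition~\ref{prop:separated-parts}''.
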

\begin{proof}
Suppose $\frB\in\BWCA+\eqref{C5}+\eqref{eq:no-atoms}$. Observe that if $Q$ is a G-representative of an algebra $\frB$ from the class, then $Q$ is an abstractive set by Proposition~\ref{prop:separated-parts}. Further, $[Q]$ is a~geometrical element. Suppose $A\in\Abs$ is such that $[A]_{\mathord{\sim}}\preceq [Q]$, i.e., $A\covered Q$. Therefore by Proposition~\ref{prop:A-r2s} we have that $A\in[Q]$, which means that $[A]=[Q]$, as required.

On the other hand, if $\frB \in \BWCA+\eqref{C5}$  and $\frB$ has an atom $a$, then by Proposition~\ref{prop:atom-in-Q}, $\{a\}\in\prePt$. Thus $\prePt\nsubseteq\prePtW$.
\end{proof}

The axiom \eqref{C5} cannot be dropped, even in the class of Boolean contact algebras, that is:
\begin{theorem}\label{th:independence-no-atoms-G-are-W}
    There is a $\frB\in\BCA+\neg\eqref{C5}$ in which there are no atoms, and in which $\prePt\nsubseteq\prePtW$; and there is also an algebra from the same class that has atoms and in which $\prePt\subseteq\prePtW$.
\end{theorem}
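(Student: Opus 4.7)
The plan is to provide two explicit witness structures.

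\textbf{Part 1 (atomless, $\prePt\nsubseteq\prePtW$).} Take $\frB_1$ to be any atomless BA, e.g.\ $\RO(\Real)$, and equip it with the \emph{universal contact} on non-zero elements: $x\con y\iffdef x\neq\zero\wedge y\neq\zero$. Axioms \eqref{C0}--\eqref{C4} are immediate. With this contact, $x\ll y$ iff $x\separ -y$ iff $x=\zero$ or $y=\one$, so for every $x$ with $\zero<x<\one$ no non-zero $y$ satisfies $y\ll x$, witnessing $\neg\eqref{C5}$. The singleton $\{\one\}$ trivially verifies \eqref{r0}--\eqref{r2}, and verifies \eqref{r3} because the premise of \eqref{r3} for $Q=\{\one\}$ just says $x,y\neq\zero$, whence contact is automatic. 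Thus $\{\one\}\in\prePt$. On the other hand $\{\one\}$ is finite, and every abstractive set (a fortiori every W-representative) is infinite by Definition~\ref{df:abstractive-set}, so $\{\one\}\notin\prePtW$, giving $\prePt\nsubseteq\prePtW$.

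\textbf{Part 2 (atomic, $\prePt\subseteq\prePtW$).} The plan here is to construct a \emph{finite} atomic BCA in which both $\prePt$ and $\prePtW$ are empty, so the inclusion holds vacuously. Concretely, let $\frB_2$ be the 8-element BA generated by three atoms $a,b,c$ with $a+b+c=\one$, and let $\con$ be the smallest relation on $\frB_2$ that extends $\overl$, contains $(a,b)$ and $(b,c)$, and is closed under symmetry and \eqref{C3}. Axioms \eqref{C0}--\eqref{C3} hold by construction. For \eqref{C4} one first checks that the only $\separ$-pair among non-zero elements is $\{a,c\}$; any failure of \eqref{C4} would therefore require $x\in\{a,c\}$ together with $y,z$ also separated from $x$, forcing $y,z\in\{\zero,c\}$ (resp.\ $\{\zero,a\}$), but then $y+z$ is still separated from $x$ and no contradiction arises.

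Axiom \eqref{C5} fails at $a$: for every non-zero $y\neq a$ one has $y\cdot(b+c)\neq\zero$, so $y\overl -a$ and hence $y\con -a$; and $a\con -a$ follows from the extra pair $a\con b$ together with \eqref{C3}. Hence no non-zero $y$ satisfies $y\ll a$. To see $\prePt=\emptyset$, note first that $\{\one\}$ fails \eqref{r3} because $a\separ c$; every singleton $\{x\}$ with $x\notin\{\zero,\one\}$ fails \eqref{r2} since $x\ll x$ reduces to $x\separ -x$, which fails in each of the six cases (either by overlap or by one of the extra pairs plus \eqref{C3}). For any $Q$ with $|Q|\geq 2$, a full enumeration of $\ll$ using $\{a,c\}$ as the unique non-trivial $\separ$-pair yields the non-zero non-unit $\ll$-pairs $a\ll a+b$ and $c\ll b+c$, together with $z\ll\one$ for every $z$; in every chain built from these some element has no $\ll$-predecessor in $Q$, violating \eqref{r2}. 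Finally, since $\frB_2$ is finite it has no infinite subsets and hence no abstractive sets, so $\prePtW=\emptyset$ as well.

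The main non-trivial step is the verification of \eqref{C4} in $\frB_2$ together with the enumeration of potential G-representatives; both collapse to the single structural observation that $\{a,c\}$ is the unique non-contact pair among non-zero elements, which keeps each case analysis short.
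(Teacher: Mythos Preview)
Your proof is correct. Part~1 is essentially the paper's construction specialized to the distinguished element $\boldd=\one$: the paper defines, on an arbitrary atomless BA, a contact $\cond$ via a fixed non-zero $\boldd$ and shows that $\{\boldd\}$ is a finite G-representative; your universal contact is exactly this with $\boldd=\one$.

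Part~2 is where the two proofs diverge. The paper builds the product $\frP=\RO(\Real)\times\frB_4$ (with $\frB_4$ carrying the full contact) and a ``disjunctive'' contact $\langle x,u\rangle\con\langle y,w\rangle\iff x\cont y\vee u\con_\one w$. This algebra has atoms, fails \eqref{C5}, and---crucially---has \emph{non-empty} $\prePt$: every $Q\times\{\zero\}$ with $Q$ a G-representative in $\RO(\Real)$ is a G-representative of $\frP$, and the paper then argues each such set is a genuine W-representative. Your 8-element example instead kills both $\prePt$ and $\prePtW$ and obtains the inclusion vacuously. This is perfectly legitimate for the statement as written and is considerably more elementary (a finite case analysis pinned on the single non-contact pair $\{a,c\}$). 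What the paper's construction buys is a non-vacuous witness: an algebra with atoms and failing \eqref{C5} in which there really are G-representatives and every one of them is a W-representative, which is closer in spirit to the intended contrast with Theorem~\ref{th:Q-and-A-is-W}.
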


To\label{page:d-contact} prove the first part of the theorem we provide a general method for constructing contact algebras. Given a $\frB\in\BA$, let $\boldd$ be its non-zero element that we call \emph{distinguished}. By means of it, we define the following relation:
\[
x\cond y\iffdef x\overl y\vee(x\overl\boldd\wedge y\overl\boldd)\,.
\]
It is routine to verify that $\cond$ is a contact relation, i.e., satisfies axioms \eqref{C0}--\eqref{C4}. Observe that the largest contact relation on $\frB$, i.e., $\frB^+\times \frB^+$, is a special case of $\cond$ in which $\boldd=\one$, or more generally, where $\boldd$ is a~\emph{dense} region in $\frB$ (i.e., such that every non-zero $x$ overlaps it).

We have that:
\begin{align*}
x\lld y&{}\iffslim x\leq y\wedge (x\leq-\boldd \vee \boldd\leq y)\\
&{}\iffslim x\leq y-\boldd \vee x+\boldd\leq y\,.
\end{align*}
from which it follows immediately that:
\begin{equation}\label{eq:d-ll-d}
\boldd\lld\boldd\,.
\end{equation}
We also have that:
\begin{corollary}\label{prop:C5-fails-for-d}
    If $x<\boldd$ and $x\neq\zero$, then $x$ does not have any non-tangential part. In consequence any Boolean contact algebra $\langle\frB,\mathord{\cond}\rangle$ fails to satisfy \eqref{C5}.
\end{corollary}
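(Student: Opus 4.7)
The plan is to apply the explicit characterization of $\lld$ derived just above the statement. Fix $x \in \frB$ with $\zero \neq x < \boldd$ and, aiming for a contradiction, suppose $x$ admits a non-tangential part, that is, there is some $y \neq \zero$ with $y \lld x$. By the characterization this amounts to $y \leq x$ together with the disjunction $y \leq -\boldd \vee \boldd \leq x$. The second disjunct contradicts the strict inequality $x < \boldd$, so $y \leq -\boldd$ must hold. But we also have $y \leq x \leq \boldd$, and therefore $y \leq \boldd \cdot (-\boldd) = \zero$, contradicting $y \neq \zero$.

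For the second sentence of the statement, recall that \eqref{C5} requires every non-zero region to have a non-tangential part. The first part exhibits a non-zero region \emph{without} such a part as soon as $\boldd$ is not an atom of $\frB$, since then there exists some $x$ with $\zero < x < \boldd$. This is precisely the situation of interest for Theorem~\ref{th:independence-no-atoms-G-are-W}, where $\frB$ is taken atomless; under that hypothesis $\boldd$ is automatically not an atom, and hence \eqref{C5} fails in $\langle\frB,\mathord{\cond}\rangle$.

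There is essentially no obstacle here: the whole argument is a one-line contradiction obtained by substituting into the already-computed formula for $\lld$. The only subtlety worth flagging is that the ``in consequence'' clause is unconditional only modulo the tacit assumption that $\frB$ contains some element strictly between $\zero$ and $\boldd$, which is immediate as soon as the ambient Boolean algebra is atomless (or, more weakly, as soon as $\boldd$ is not an atom).
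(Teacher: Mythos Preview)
Your argument for the first sentence is correct and is exactly the intended one: plug into the characterization of $\lld$ displayed immediately before the corollary and derive a contradiction. The paper gives no explicit proof, so there is nothing to compare beyond this.

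Your observation about the second sentence is also well taken, and in fact slightly sharper than what the paper says. As stated, the ``in consequence'' clause is not literally true for \emph{every} $\langle\frB,\mathord{\cond}\rangle$: if $\boldd$ happens to be an atom of $\frB$, then for any non-zero $x$ either $\boldd\leq x$ or $x\leq -\boldd$, and in both cases $x\lld x$, so \eqref{C5} actually holds. The conclusion requires the existence of some $x$ with $\zero<x<\boldd$, which---as you note---is guaranteed whenever $\frB$ is atomless, and this is precisely the setting in which the corollary is invoked (the proof of the first part of Theorem~\ref{th:independence-no-atoms-G-are-W}). So your caveat is accurate and the argument is complete for the intended application.
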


\begin{lemma}\label{lem:d-G-representative}
$\{\boldd\}$ is a G-representative of $\langle B,\mathord{\cond}\rangle$, and $\uparrow\{\boldd\}$ is its only G-point.
\end{lemma}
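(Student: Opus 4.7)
The plan has two parts. First, I would verify (r0)--(r3) for $\{\boldd\}$; second, I would compute the generated filter and argue uniqueness.

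For the first part, (r0) follows from $\boldd \neq \zero$, (r1) holds trivially for a singleton, and (r2) is witnessed by $\boldd \lld \boldd$, which is precisely \eqref{eq:d-ll-d}. For (r3), if $\boldd \overl x$ and $\boldd \overl y$, then the second disjunct of the definition of $\cond$ is met, giving $x \cond y$. Thus $\{\boldd\}$ is a G-representative. Applying \eqref{eq:conditions-for-in-FQ}, the associated filter is $\fil_{\{\boldd\}} = \{x \in \frB \mid \boldd \leq x\} = \uparrow\{\boldd\}$.

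For uniqueness, I would show that any G-representative $Q$ of $\langle B, \cond \rangle$ satisfies both $\boldd \in Q$ and $Q \subseteq \uparrow\{\boldd\}$, whence $\fil_Q = \uparrow\{\boldd\}$. The driving observation is that $\cond$ restricted to $\downop -\boldd$ coincides with overlap, since neither disjunct in the definition of $\cond$ can engage $\boldd$ when both arguments miss it. If some $v \in Q$ had $v \leq -\boldd$, a cofinality argument (using that any $q \in Q$ above $v$ automatically overlaps whatever $v$ overlaps) would present the sub-chain $\{q \in Q \mid q \leq v\}$ as a G-representative of $(\downop -\boldd, \overl)$; if $\boldd \notin Q \subseteq \uparrow\{\boldd\}$, the translated family $\{q - \boldd \mid q \in Q\}$ plays the same role. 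Both options contradict the fact---obtained by combining Lemma~\ref{lem:no-W-points-atomless-C=O} with Theorem~\ref{th:Q-and-A-is-W}---that no G-representative exists in the overlap contact algebra on an atomless complete Boolean algebra. Finally, if some $q \in Q$ fails $\boldd \leq q$, then (r2) forces a smaller $v \in Q$ with $v \leq -\boldd$ (since $\boldd \leq q$ is unavailable in the definition of $\lld$), reducing to the first case.

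The main obstacle is the uniqueness claim: the careful transfer of (r3) to the restricted or translated set, which works only under the standing atomlessness and completeness hypotheses on $\frB$---present in the surrounding Theorem~\ref{th:independence-no-atoms-G-are-W}---through which Lemma~\ref{lem:no-W-points-atomless-C=O} and Theorem~\ref{th:Q-and-A-is-W} together deny the existence of a G-representative in the sub-algebra $\langle\downop -\boldd,\overl\rangle$.
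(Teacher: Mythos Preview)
Your verification of (r0)--(r3) for $\{\boldd\}$ and the computation of $\fil_{\{\boldd\}}=\uparrow\{\boldd\}$ match the paper's proof exactly.

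For uniqueness the paper takes a different and much shorter route: it invokes an external result (\citep[Fact 6.28]{Gruszczynski-NTP}) saying that distinct G-points contain regions separated from one another, obtains $x\in\frp$ and $y\geq\boldd$ with $x\notcond y$, and unwinds the definition of $\lld$ to reach $x\leq-\boldd$. Your approach is self-contained and structurally different: you reduce the existence of a second G-point to the existence of a G-representative in the overlap algebra $\langle\downop-\boldd,\overl\rangle$, and then appeal to Lemma~\ref{lem:no-W-points-atomless-C=O} together with Theorem~\ref{th:Q-and-A-is-W}.

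There is, however, a genuine gap in your argument. Lemma~\ref{lem:no-W-points-atomless-C=O} requires \emph{completeness} of the algebra, and this hypothesis is simply not available: Lemma~\ref{lem:d-G-representative} is stated for an arbitrary $\frB\in\BA$, and even the application in the proof of Theorem~\ref{th:independence-no-atoms-G-are-W} takes ``any atomless Boolean algebra~$\frB$'' with no completeness assumption. Without completeness your reduction fails outright. Indeed, observe that for a chain $Q'$ in any Boolean algebra, condition~\eqref{r3} with respect to the overlap contact is exactly the assertion that the filter generated by $Q'$ is an ultrafilter (apply \eqref{r3} to $x$ and $-x$). In the countable atomless Boolean algebra every ultrafilter is non-principal and has a decreasing chain as a base; any such chain is therefore a G-representative in the overlap algebra. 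So $\langle\downop-\boldd,\overl\rangle$ may well carry G-representatives, and the contradiction you are aiming for does not materialise. Your own caveat that the argument ``works only under the standing atomlessness and completeness hypotheses \ldots\ present in the surrounding Theorem~\ref{th:independence-no-atoms-G-are-W}'' misreads that theorem: completeness is not among its hypotheses.
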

\begin{proof}
\eqref{r0} holds by the definition of $\boldd$, \eqref{r1} and \eqref{r2} by \eqref{eq:d-ll-d}, and \eqref{r3} by the definition of $\cond$. In consequence, $\uparrow\{\boldd\}$ is a G-point.

Suppose there is a G-point $\frp\neq{}\uparrow\{\boldd\}$. By \citep[Fact 6.28]{Gruszczynski-NTP} there are $x\in\frp$ and $y\geq\boldd$ such that $x\notcond y$, i.e., $y\lld -x$. Therefore either $y\leq-\boldd$ or $\boldd\leq-x$. The first possibility is exlcuded by the fact that $\boldd\neq\zero$ and $\boldd$ is below $y$. Therefore the second one holds, and thus $x\leq-\boldd$.
\end{proof}

\begin{proof}[Proof of the first part of Theorem~\ref{th:independence-no-atoms-G-are-W}]
Take any atomless Boolean algebra $\frB$, fix its distinguished element $\boldd$, and expand it to the Boolean contact algebra $\langle\frB,\cond\rangle$. By Lemma~\ref{lem:d-G-representative}, the singleton $\{\boldd\}$ is a G-representative that is finite and therefore cannot be a~W-representative. By Proposition~\ref{prop:C5-fails-for-d}, the algebra has regions without non-tangential parts, so it fails to satisfy~\eqref{C5}.
\end{proof}

\begin{proof}[Proof of the second part of Theorem~\ref{th:independence-no-atoms-G-are-W}] Consider the following two contact algebras, $\langle\RO(\Real),\mathord{\cont}\rangle$ and the four element Boolean algebra $\frB_4\defeq\{\zero,a,b,\one\}$ with the full contact relation $\con_\one$ (i.e., $\boldd\defeq\one$). Consider their product\label{page:P-algebra} $\frP\defeq \RO(\Real)\times\frB_4$ as Boolean algebras (i.e., all the operations are defined coordinate-wise) but with the contact defined as:
\[
\langle x,u\rangle\con\langle y,w\rangle\iffdef x\cont y\vee u\con_1 w\,.
\]
It is routine to verify that $\con$ satisfies \eqref{C0}--\eqref{C4}.\footnote{This is not, then, the product of the two algebras as the \emph{contact} algebras. The relation $\langle x,u\rangle\mathrel{R}\langle y,w\rangle\iffdef x\cont y\wedge u\con_1 w$ is not contact, as the reader may easily convince themself.} We have that:
\begin{align*}
\langle x,u\rangle\ll\langle y,w\rangle&{}\iffslim x\llt y\wedge u\ll_\one w\\
&{}\iffslim x\llt y\wedge (u=\zero\vee w=\one)\,.
\end{align*}
The algebra has two atoms: $\langle\zero,a\rangle$ and $\langle\zero,b\rangle$. $\frP$ does not satisfy \eqref{C5}, as none of the two atoms is its own non-tangential part. In consequence, neither the singleton of the former nor the singleton of the latter is a G-representative.

Observe that the set of G-representatives of $\frP$ contains all sets of the form $Q\times\{\zero\}$, where $Q$ is a~G-representative of $\langle\RO(\Real),\mathord{\cont}\rangle$. It is quite obvious that \eqref{r0}--\eqref{r2} are satisfied by $Q\times\{\zero\}$. As for \eqref{r3}, if we have pairs $\langle x,u\rangle$ and $\langle y,w\rangle$ that overlap every element of $Q\times\{\zero\}$, then for all $z\in Q$, $x\overl z$ and $y\overl z$, so $x\cont y$, which is enough to conclude that $\langle x,u\rangle\con\langle y,w\rangle$.

The only products that are G-representatives in the algebra are sets of the form $Q\times\{\zero\}$, where $Q$ is a G-representative in $\RO(\Real)$. Firstly, neither $Q\times\{a\}$ nor $Q\times\{b\}$ can be G-representatives, as no element of any of the two sets has a non-tangential part. Secondly, any set of the form $Q\times\{\one\}$, where $Q$ is a G-representative in $\RO(\Real)$, fails to satisfy \eqref{r3}. To see this, take any regular open set $x$ that overlaps every element of $Q$ and consider pairs $\langle x,0\rangle$ and $\langle 0,\one\rangle$. We see that for any $\langle y,\one\rangle\in Q\times\{\one\}$, $\langle x,0\rangle\overl \langle y,\one\rangle$ and $\langle 0,\one\rangle\overl \langle y,\one\rangle$, yet $\langle x,0\rangle\notcon\langle 0,\one\rangle$. Thirdly, any product $Q\times M$, where $M$ is an at least two element subset of $\frB_4$, fails to be a chain and so cannot be a G-representative. Fourthly, if we have a set $M\times\{\zero\}$ where $M$ is not a G-representative, i.e., it fails to meet one of the conditions \eqref{r0}--\eqref{r3}, then since $\zero\ll_\one\zero$, $M\times\{\zero\}$ also fails to meet one of the four conditions for~$\ll$.

Of course, every $Q\times\{\zero\}$ is an abstractive set, and in the case where it covers an abstractive set $A\times\{0\}$, $Q$ must cover $A$ in $\RO(\Real)$. So  by Proposition~\ref{prop:A-r2s} (iii) $A$ is a W-representative in $\RO(\Real)$, and so $A\times\{\zero\}$ is a W-representative in $\frP$.
\end{proof}

\section{W-representatives with countable coinitiality are G-representatives }\label{sec:W-points-are-G-points-countable}

In this section, we reconstruct \citeauthor{Biacino-Gerla-CSGWDP}'s proof according to which every W-representative that can be represented by an $\omega$-sequence (in the sense explained below) is also a G-representative. We aim to show that with respect to the original proof from \citep{Biacino-Gerla-CSGWDP} we need to assume the so-called \emph{coherence} axiom to assure that the machinery works properly.

To be more precise, \citeauthor{Biacino-Gerla-CSGWDP} in Theorem 5.3 prove that if we extend the standard axiomatization for contact with the \emph{interpolation} axiom\footnote{They call it the \emph{normality} axiom.}:
\begin{equation}\tag{IA}\label{IA}
  x\ll y\rarrow(\exists z\in\frB)\,x\ll z\ll y\
\end{equation}
we can prove that every Whitehead representative that can be represented as an $\omega$-sequence is a Grzegorczyk representative. However, to show that a certain sequence of regions is an abstractive set they make a~transition that cannot be justified without an application of the so-called \emph{coherence} axiom, that we introduce below. Thus, in the premises of Theorem~\ref{th:BG-improved} we explicitly assume coherence in the form of \eqref{C6} below. Coherence is a~mereotopological counterpart of topological connectedness.\footnote{See, e.g., \citep{Bennett-Duntsch-AAT} for details.}

\begin{definition}
For a~given chain $C$ let the \emph{coinitiality} of $C$ be the smallest cardinal number~$\kappa$ such that there exists an antitone function $f\colon\kappa\fun C$ with $f[\kappa]$ coinitial with~$C$.
\end{definition}

\begin{definition}
  For a~given $\frB\in\BWCA$, let $\prePtW^\omega$ be the set of all Whitehead representatives whose coinitiality is~$\omega$.
\end{definition}

Observe that the set~$A$ from Example~\ref{ex:Hart} is an instance of a~W-representative whose coinitiality is $\omega_1$. On the other hand, a~local basis of any point $r\in\Real$ (with the standard topology) that satisfies \eqref{eq:R1} is a~W-representative that is an element of $\prePtW^\omega$ in $\RO(\Real)$.

\begin{definition}\label{df:coherent}
A Boolean weak contact algebra is \emph{coherent}\footnote{The term is taken from \citep{Roeper-RBT}.} iff its unity is coherent, iff it satisfies the following \emph{coherence} axiom:
\begin{equation}\tag{C6}\label{C6}
  x\notin\{\zero,\one\}\rarrow x\con -x\,.\qedhere
\end{equation}
\end{definition}

\begin{proposition}\label{prop:C6-ll-in-part} In the class $\BWCA$, \eqref{C6} is equivalent to\/\textup{:}
  \begin{equation*}
  x\notin\{\zero,\one\}\wedge x\ll y\rarrow x<y\,.
  \end{equation*}
\end{proposition}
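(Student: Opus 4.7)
The plan is to unfold the definition of $\ll$ and exploit the equivalence $x\ll x \iffslim x\separ -x \iffslim \neg(x\con -x)$ in both directions, using the already-established fact \eqref{eq:llcIngr} that $x\ll y \rarrow x\leq y$.

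For the forward direction, I would assume \eqref{C6} and pick $x\notin\{\zero,\one\}$ together with $x\ll y$. From \eqref{eq:llcIngr} I get $x\leq y$, so it remains only to exclude $x=y$. If $x=y$ then $x\ll x$, i.e., $x\separ -x$, i.e., $\neg(x\con -x)$; but \eqref{C6} applied to $x$ (which is neither $\zero$ nor $\one$) yields $x\con -x$, a contradiction. Hence $x<y$.

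For the converse, I would argue contrapositively: suppose \eqref{C6} fails, so there is some $x\notin\{\zero,\one\}$ with $\neg(x\con -x)$, i.e., $x\separ -x$. By \eqref{df:ll} this is precisely $x\ll x$. Instantiating the assumed implication with $y\defeq x$ gives $x<x$, which is absurd.

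I do not expect any real obstacle here; the statement is essentially a definitional rearrangement, and the only axiom of $\BWCA$ actually invoked is \eqref{eq:llcIngr}. The only point worth flagging is that the hypothesis $x\notin\{\zero,\one\}$ in the right-hand condition is exactly what makes the excluded case $x=y$ unavailable in the forward direction, which is why the biconditional lands on \eqref{C6} rather than on a weaker or stronger variant.
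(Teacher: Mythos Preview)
Your proof is correct and follows essentially the same approach as the paper: both directions hinge on the observation that $x\ll x$ is equivalent to $\neg(x\con -x)$, and the backward direction is identical. Your forward direction is in fact slightly cleaner than the paper's, which takes an unnecessary case split on whether $y=\one$ and applies \eqref{C6} to $y$ rather than directly to $x$.
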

\begin{proof}
($\rarrow$) If $x$ is neither $\zero$ nor $\one$, then $x\con-x$. Assume that $x\ll y$, i.e., $x\notcon-y$. If $y=\one$, then $x<y$. So suppose $y\neq\one$. Since $x\leq y$, also $y\neq\zero$ and $y\con-y$. Therefore $x\neq y$, which means that $x<y$, as required.

\smallskip

($\larrow$) If $x\notin\{\zero,\one\}$ and $x\notcon-x$, then $x\ll x$, and $x<x$ by the assumption. A~contradiction.
\end{proof}

Recall that the following condition holds in every BCA:
\begin{equation}\label{eq:ll-sum-prod}
x\ll u\wedge y\ll v\rarrow x\cdot y\ll u\cdot v\,.
\end{equation}

\begin{proposition}\label{prop:for-key-theorem}
    In every $\frB\in\BCA$, \eqref{C6} is equivalent to the following condition\/\textup{:}
    \[
    x\neq u\wedge x\ll u\wedge y\ll v\rarrow x\cdot y\ll u\cdot v\wedge x\cdot y\neq u\cdot v\,.
    \]
\end{proposition}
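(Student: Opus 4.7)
The plan is to prove both directions of the claimed equivalence, pivoting on Proposition~\ref{prop:C6-ll-in-part}, which reformulates \eqref{C6} as the statement that for $x \notin \{\zero,\one\}$, $x \ll y$ implies $x < y$.

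For the backward direction, it suffices, by that proposition, to rule out $x \ll x$ whenever $x \notin \{\zero,\one\}$. Supposing for contradiction some such $x$ exists, I would instantiate the displayed condition with the substitution $(x,u,y,v) \defeq (\zero, -x, x, x)$: the premises $\zero \neq -x$ (which holds since $x \neq \one$), $\zero \ll -x$ (immediate from \eqref{C0}), and $x \ll x$ (by assumption) are all satisfied, so the conclusion yields $\zero \cdot x \neq -x \cdot x$, i.e., $\zero \neq \zero$, a contradiction.

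For the forward direction, assume \eqref{C6} together with $x \neq u$, $x \ll u$, $y \ll v$. The first conjunct $x \cdot y \ll u \cdot v$ is a direct invocation of \eqref{eq:ll-sum-prod}. For the strictness $x \cdot y \neq u \cdot v$, I would first observe that $x = \one$ is vacuous, since $\one \ll u$ would force $u = \one$ via \eqref{C0} and \eqref{C1}, contradicting $x \neq u$; the remaining cases $x = \zero$ and $x \notin \{\zero,\one\}$ both yield $x < u$, the former trivially (as then $u \neq \zero$) and the latter via Proposition~\ref{prop:C6-ll-in-part}. Consequently $u - x \neq \zero$, and expanding
\[
u \cdot v - x \cdot y = (u - x) \cdot v + u \cdot (v - y)
\]
reduces the strictness to checking that at least one of the two summands is nonzero. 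The main obstacle is precisely this last step: combining $u - x \neq \zero$ with the non-tangential hypotheses (and the tacit nondegeneracy of $v$) to exclude the simultaneous vanishing of both summands, presumably via a further appeal to the non-tangential containments $x \ll u$ and $y \ll v$.
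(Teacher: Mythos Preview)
Your backward direction is correct; the instantiation $(\zero,-x,x,x)$ works just as well as the paper's choice $(x,\one,x,x)$, and both collapse to an immediate contradiction ($\zero\neq\zero$, resp.\ $x\neq x$) from a hypothetical $x\ll x$ with $x\notin\{\zero,\one\}$.

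The forward direction, however, cannot be completed---not because your expansion is wrong, but because the displayed condition is not in fact a consequence of \eqref{C6}. In $\langle\RO(\Real),\cont\rangle$ (which satisfies \eqref{C6}) take $x=(-1,1)$, $u=(-2,2)$, $y=(3.5,4.5)$, $v=(3,5)$: then $x\neq u$, $x\llt u$, $y\llt v$, yet $x\cdot y=\emptyset=u\cdot v$. So the obstacle you isolate is genuine and unfixable: nothing in the hypotheses forces either summand of $(u-x)\cdot v+u\cdot(v-y)$ to be nonzero when $u$ and $v$ are disjoint, and your parenthetical about ``tacit nondegeneracy of $v$'' would not rescue it either, since in the counterexample all four regions are nonzero. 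The paper's own one-line argument---``$x\cdot y\ll u\cdot v$, so by \eqref{C6} we obtain $x\cdot y\neq u\cdot v$''---glosses over exactly the same gap: the appeal is really to Proposition~\ref{prop:C6-ll-in-part}, which requires $x\cdot y\notin\{\zero,\one\}$, and while $x\cdot y=\one$ is easily excluded (it would force $x=u=\one$), the case $x\cdot y=\zero=u\cdot v$ is simply not handled. For the intended application in Theorem~\ref{th:BG-improved} the extra hypothesis $u\cdot v\neq\zero$ is available (it is property~(a) there), so the proposition should really carry that additional premise.
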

\begin{proof}
($\rarrow$) Assume \eqref{C6}. If $x\neq u\wedge x\ll u\wedge y\ll v$, then by \eqref{eq:ll-sum-prod} we have that $x\cdot y\ll u\cdot v$, so by \eqref{C6} we obtain: $x\cdot y\neq u\cdot v$,

\smallskip

($\larrow$) Suppose there is $x\notin\{\zero,\one\}$ such that $x\ll x$. Since $x\ll\one$ and $x\neq\one$, by the hypothesis we have that $x\cdot x\neq x\cdot\one$, a contradiction.
\end{proof}

\begin{theorem}[after \citealp{Biacino-Gerla-CSGWDP}]\label{th:BG-improved}
  If $\frB\in\BCA+\eqref{IA}+\eqref{C6}$, then $\prePtW^\omega\subseteq\prePt$.
\end{theorem}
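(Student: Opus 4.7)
The plan is to establish each of the four conditions \eqref{r0}--\eqref{r3} that make a set of regions a G-representative. The first three come essentially for free, since $A\in\prePtW^\omega$ is in particular an abstractive set: \eqref{r0} holds by definition; \eqref{r1} is part of Definition~\ref{df:abstractive-set}; and \eqref{r2} follows from the strong form \eqref{r2s} granted by Proposition~\ref{prop:A-r2s}(i). Using the countable coinitiality hypothesis I replace $A$ by a coinitial $\omega$-subsequence (which lies in the same equivalence class, hence is also a W-representative), and after eliminating repetitions I may write $A=\{a_n\}_{n\in\omega}$ as a strictly $\ll$-decreasing chain, where Proposition~\ref{prop:C6-ll-in-part} rules out the degenerate possibility $a_n\ll a_n$. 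All the weight of the theorem sits in~\eqref{r3}.

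For \eqref{r3} I will argue by contradiction. Suppose there exist $x,y\in\frB$ with $x\overl a_n$ and $y\overl a_n$ for every $n$ while $x\notcon y$, that is, $x\ll -y$. Iterating \eqref{IA} from $x\ll -y$, I build a sequence $\langle u_n\rangle_{n\in\omega}$ with $u_0\ll -y$ and $x\ll u_{n+1}\ll u_n$ for every $n\in\omega$; then, by \eqref{eq:ll-ll->ll}, $u_n\ll -y$, so $u_n\leq -y$ and $u_n\notcon y$ for every~$n$. Putting $b_n\defeq a_n\cdot u_n$ and $B\defeq\{b_n\mid n\in\omega\}$, I aim to show that $B\in\Abs$ and that $[B]\prec[A]$ in $\langle\Abs/_{\mathord{\sim}},\preceq\rangle$; this will contradict the minimality of $[A]$.

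Verifying that $B$ is an abstractive set is the main obstacle. Non-vanishing $b_n\neq\zero$ is easy: $x\leq u_n$ together with $x\overl a_n$ gives $a_n\cdot x\leq b_n$, hence $b_n\neq\zero$. The chain condition $b_{n+1}\ll b_n$ \emph{together with} $b_{n+1}\neq b_n$ is exactly what Proposition~\ref{prop:for-key-theorem} delivers from $a_{n+1}\neq a_n$, $a_{n+1}\ll a_n$ and $u_{n+1}\ll u_n$; this is the step where coherence \eqref{C6} is indispensable, and it is precisely the point at which the original Biacino--Gerla argument leaves a gap that \eqref{C6} now plugs. Condition \eqref{A} is inherited from $A$ via $b_n\leq a_n$. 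To see $[B]\preceq[A]$, note that $b_n\leq a_n$ provides the required $B$-witness beneath each $a_n$; for strictness, any $a_m\leq b_0=a_0\cdot u_0$ would force $a_m\leq u_0\leq -y$, contradicting $y\overl a_m$, so $\neg([A]\preceq[B])$. Hence $[B]\prec[A]$, contradicting $[A]$ minimal, and the proof is complete.
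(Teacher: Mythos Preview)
Your argument is correct and mirrors the paper's contrapositive strategy: from a failure of \eqref{r3}, interpolate a $\ll$-descending sequence $(u_n)$ between $x$ and $-y$, set $b_n\defeq a_n\cdot u_n$, and exhibit $B$ as an abstractive set with $[B]\prec[A]$.

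The one substantive difference is how you verify \eqref{A} for $B$. The paper derives it from Proposition~\ref{prop:C6-ll-in-part} (this is precisely where \eqref{C6} enters their proof), whereas you observe that \eqref{A} is \emph{inherited} from $A$ via $b_n\leq a_n$: any non-zero lower bound of $B$ would also bound the coinitial sequence $(a_n)$, hence $A$ itself. That inheritance step is valid---but note that it does not use \eqref{C6} at all. The two places where you \emph{do} invoke \eqref{C6} (arranging the $a_n$ to be strictly $\ll$-decreasing via Proposition~\ref{prop:C6-ll-in-part}, and obtaining $b_{n+1}\neq b_n$ via Proposition~\ref{prop:for-key-theorem}) then become redundant: condition \eqref{r1} only needs $b_{n+1}\ll b_n$, which \eqref{eq:ll-sum-prod} already delivers, and \eqref{A} you have secured by inheritance. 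Read literally, your proof therefore appears to establish $\prePtW^\omega\subseteq\prePt$ in $\BCA+\eqref{IA}$ alone, without \eqref{C6}. The paper explicitly asserts that ``there is no way to escape the coherence axiom in the construction'' and leaves the \eqref{C6}-free case as an open problem, so this discrepancy is worth scrutinizing: either excise the \eqref{C6} appeals and claim the stronger result, or identify where coherence is genuinely doing work that the inheritance argument misses.
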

\begin{proof}
Let $A$ be an~abstractive set, and let $(x_i)_{i\in\omega}$ be its coinitial subset. Assume it is not  a~G-representative, i.e., that it fails to satisfy~\eqref{r3}. Let then~$u$ and~$v$ be such that for every $i\in\omega$, $u\overl x_i\overl v$, but $u\ll-v$. Observe that $u\neq\zero\neq v$. By~\eqref{IA} (and by the Axiom of Dependent Choices), there is a~sequence $(u_i)_{i\in\omega}$ such that:
\[
u\ll\ldots u_2\ll u_1\ll u_0=-v\,.
\]
We have that for every $i,j\in\omega$, (a) $u_i\cdot x_i\neq\zero$ and (b) $x_j\nleq u_i\cdot x_i$ (since $u_i\cdot x_i\ll -v$ and $x_j\overl v$, i.e. $x_j\nleq-v$). Observe that $(u_i\cdot x_i)_{i\in\omega}$ is an abstractive set. \eqref{r0} is a~consequence of~(a), \eqref{r1} holds for the sequence because of \eqref{eq:ll-sum-prod}: $u_{i+1}\cdot x_{i+1}\ll u_i\cdot x_i$. But $u_i\cdot x_i\neq\one$, so thanks to Proposition~\ref{prop:C6-ll-in-part} we have that $(u_i\cdot x_i)_{i\in\omega}$ satisfies~\eqref{A}. The sequence is covered by  $(x_i)_{i\in\omega}$, but $(x_i)_{i\in\omega}$ is not covered by $(u_i\cdot x_i)_{i\in\omega}$ by (b), so the former cannot be a~W-representative. So $A$ is not a~W-representative, either.
\end{proof}

As it can be seen in the proof above, there are two ways two justify the conclusion that $(u_i\cdot x_i)_{i\in\omega}$ meets the condition~\eqref{A}: either by an application of Proposition~\ref{prop:C6-ll-in-part} or by a~reference to Proposition~\ref{prop:for-key-theorem}. Both these conditions are equivalent to \eqref{C6}, so there is no way to escape the coherence axiom in the construction. This, of course, does not show that `$\prePtW^\omega\subseteq\prePt$' is independent from $\BCA+\eqref{IA}$, as it only means that \emph{the proof} itself requires \eqref{C6}. Nor does it undermine the construction from the proof, as the remedy is relatively simple and only calls for the explicit assumption of the axiom. However, it would be desirable to fully know the status of the coherence axiom with respect to the sentence `$\prePtW^\omega\subseteq\prePt$'. As we have not been  able to settle it, we put forward the following:
\begin{problem}
    Show that `$\prePtW^\omega\subseteq\prePt$' cannot be deduced from the axioms for Boolean contact algebras extended with the interpolation axiom. That is, find an algebra $\frB\in\BCA+\eqref{IA}+\eqref{eq:no-atoms}$ that has a~W-representative which is not a~G-representative. Similarly, show that `$\prePtW^\omega\subseteq\prePt$' is not true in some $\frB\in\BCA+\eqref{C6}+\eqref{eq:no-atoms}$. We add the assumption about the non-existence of atoms, since Biaciono and Gerla have it among their postulates.
\end{problem}

It was proven in \citep{Duntsch-et-al-ARAATTRCC} that those Boolean contact algebras that satisfy \eqref{C5} and \eqref{C6} must be atomless.  In light of this and Theorems~\ref{th:Q-and-A-is-W} and \ref{th:BG-improved} we have:
\begin{theorem}\label{th:conclusion}
    If $\frB\in\BCA+\eqref{IA}+\eqref{C5}+\eqref{C6}$, then $\prePtW^\omega\subseteq\prePt\subseteq\prePtW$. If additionally $\frB$ satisfies the countable chain condition, then $\prePtW=\prePt$.
\end{theorem}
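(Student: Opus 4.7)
The first statement follows directly from earlier results. The inclusion $\prePtW^{\omega}\subseteq\prePt$ is exactly Theorem~\ref{th:BG-improved}, whose hypotheses are a subset of ours. For $\prePt\subseteq\prePtW$, Theorem~\ref{th:Q-and-A-is-W} reduces the task to showing that $\frB$ is atomless, which is the content of the D\"untsch-et-al.\ result invoked immediately before the present theorem: in the class $\BCA+\eqref{C5}+\eqref{C6}$ every algebra is atomless.

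For the second statement, since $\prePt\subseteq\prePtW$ is already in hand, it suffices to prove $\prePtW\subseteq\prePt$. By the first part it is enough to show that under ccc every W-representative has coinitiality~$\omega$. Fix $A\in\prePtW$ and assume toward a contradiction that its coinitiality exceeds~$\omega$, i.e.\ there is no antitone coinitial $\omega$-indexed sequence in $A$. A small observation is that this is equivalent to saying that no countable $D\subseteq A$ is coinitial in $A$: any such $D$ would be a countable chain with no minimum (inherited from~$A$), and a straightforward recursion inside $D$ produces an antitone $\omega$-sequence coinitial in~$A$. Consequently, for every countable $D\subseteq A$ there exists $a\in A$ with $a<d$ for each $d\in D$, so by transfinite recursion we can build a strictly decreasing sequence $(x_\alpha)_{\alpha<\omega_1}\subseteq A$: at stage $\alpha<\omega_1$ the set $\{x_\beta\mid\beta<\alpha\}$ is countable and admits a strict lower bound in~$A$, which we take as~$x_\alpha$.

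Put $y_\alpha\defeq x_\alpha\cdot-x_{\alpha+1}$. Strict decrease forces $y_\alpha\neq\zero$; for $\alpha<\beta$ we have $y_\beta\leq x_\beta\leq x_{\alpha+1}$ while $y_\alpha\leq-x_{\alpha+1}$, so $y_\alpha\cdot y_\beta=\zero$. Hence $\{y_\alpha\mid\alpha<\omega_1\}$ is an antichain of non-zero regions of cardinality~$\aleph_1$, contradicting ccc. Therefore the coinitiality of~$A$ is~$\omega$, and $A\in\prePtW^{\omega}\subseteq\prePt$, finishing the argument.

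The only slightly delicate step is the reformulation ``coinitiality greater than $\omega$'' as ``every countable subset admits a strict lower bound'', which is what fuels the transfinite recursion; after that, elementary Boolean arithmetic produces the desired uncountable antichain. Note that the coherence axiom \eqref{C6} plays its nontrivial role only inside Theorem~\ref{th:BG-improved}, applied here as a black box, while the ccc step relies solely on the underlying Boolean structure.
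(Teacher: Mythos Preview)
Your proof is correct and follows the paper's intended route: the paper states the theorem as an immediate consequence of the D\"untsch--Wang--McCloskey atomlessness result together with Theorems~\ref{th:Q-and-A-is-W} and~\ref{th:BG-improved}, and you invoke exactly these. For the ccc clause the paper gives no argument at all, and the standard antichain-from-a-long-chain construction you supply is precisely the implicit step; your care in reducing ``coinitiality~$>\omega$'' to ``every countable subset has a strict lower bound in~$A$'' is the right way to make the transfinite recursion go through.
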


Recall that a~topological space $X$ is \emph{semi-regular} iff it has a~basis that consist of regularly open subsets of~$X$. It is \emph{weakly regular} iff for every non-empty open set $M$ there exists a non-empty open set $K$ such that $\Cl K\subseteq M$. $X$ is \emph{$\kappa$-normal}\footnote{These spaces were introduced and studied by \citet{Shchepin-RFNNS}.} (or \emph{weakly normal}) iff any pair of disjoint regular closed sets can be separated by disjoint open sets.

By \citep[Proposition 3.7]{Duntsch-et-al-RTBCA} we have that for a space $X$ and a dense subalgebra $\frB$ of $\langle\RO(X),\mathord{\cont}\rangle$ the following correspondences hold:
\begin{enumerate}[label=(\roman*)]
    \item $\cont$ satisfies \eqref{C5} iff $X$ is weakly regular,
    \item $\cont$ satisfies \eqref{C6} iff $X$ is connected,
    \item $\cont$ satisfies \eqref{IA} iff $X$ is $\kappa$-normal.
\end{enumerate}
Let $\TBCA$ be the class of all \emph{topological} contact algebras, that is those of the form $\langle \frB,\mathord{\cont}\rangle$, where $\frB$ is a subalgebra of a~regular open algebra $\RO(X)$ of a~topological space~$X$.

\begin{lemma}[{\citealp[Lemma 3.56]{Bennett-Duntsch-AAT}}]
    $\frB\in\TBCA+\eqref{IA}+\eqref{C5}+\eqref{C6}$ iff $\frB$ is a~dense subalgebra of $\langle\RO(X),\mathord{\cont}\rangle$, where $X$ is a~$\kappa$-normal, connected $T_1$-space.
\end{lemma}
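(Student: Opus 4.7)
The plan is to combine the three correspondences (i)--(iii) stated immediately before the lemma with a careful choice of representation space for the forward direction. Both implications essentially reduce to translating the three mereotopological axioms to their topological counterparts (weakly regular, connected, $\kappa$-normal), and vice versa; the genuine technical work lies in (a) guaranteeing density of the embedding in the forward direction and (b) in recovering weakly regular from the stated hypotheses in the backward direction.

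For the backward direction, suppose $\frB$ is a dense subalgebra of $\langle\RO(X),\mathord{\cont}\rangle$ where $X$ is $\kappa$-normal, connected, and $T_1$. Correspondences (ii) and (iii) give that $\cont$ on $\RO(X)$ satisfies \eqref{C6} and \eqref{IA}, and these transfer verbatim to any Boolean subalgebra as they are universal first-order conditions on $\con$. For \eqref{C5} one needs $X$ to be weakly regular; I would argue that $X$ is semi-regular (so that its topology is generated by $\RO(X)$), after which weakly regular follows from the $T_1$ and $\kappa$-normality assumptions by separating a regular closed neighborhood of a point from the complement of a given open set. Correspondence (i) then yields \eqref{C5} on $\RO(X)$, and the density of $\frB$ in $\RO(X)$ transfers the existential witnesses of \eqref{C5} down to $\frB$ itself.

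For the forward direction, let $\frB\in\TBCA+\eqref{IA}+\eqref{C5}+\eqref{C6}$. By definition of $\TBCA$ there exists some topological space $Y$ with $\frB$ a subalgebra of $\langle\RO(Y),\mathord{\cont}\rangle$, but $\frB$ need not be dense in $\RO(Y)$ and $Y$ need not have any of the desired properties. The strategy is to replace $Y$ by a canonical space $X$ built from $\frB$---for instance, the Stone-type dual consisting of maximal round filters (or, equivalently, maximal clans), topologized so that the map $r\mapsto\{\frp\in X\mid r\in\frp\}$ embeds $\frB$ into $\RO(X)$ as a dense, contact-preserving subalgebra. Once such $X$ is at hand, the three correspondences applied right-to-left yield, respectively, that $X$ is weakly regular (from \eqref{C5}), connected (from \eqref{C6}), and $\kappa$-normal (from \eqref{IA}); the $T_1$ property is built into the representation, since any two distinct maximal filters omit some common region and are therefore separated by its image.

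The hardest step is the construction of the canonical space $X$ in the forward direction together with the verification that the embedding $\frB\hookrightarrow\RO(X)$ is dense and preserves contact (not merely the Boolean structure). Axiom \eqref{C5} is essential for density: it provides enough non-tangential refinements to ensure that every non-empty regular open subset of $X$ captures a region of $\frB$; axiom \eqref{IA} then guarantees that the non-tangential inclusion on $\frB$ interpolates as required for correspondence (iii) to read off $\kappa$-normality of $X$. A secondary but delicate point, on the backward side, is the passage from $T_1$ to weakly regular; if the available hypotheses are insufficient in full generality, one can either invoke semi-regularity of $X$ as an implicit standing assumption (as is customary in representation theorems for BCAs) or derive it from the presence of a dense subalgebra of $\RO(X)$ satisfying the stated contact axioms.
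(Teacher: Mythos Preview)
The paper does not prove this lemma; it is quoted verbatim from \citet[Lemma 3.56]{Bennett-Duntsch-AAT} and used as a black box to derive the subsequent corollary. So there is no in-paper argument to compare against.

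That said, your sketch is broadly the standard route and matches the architecture of the Bennett--D\"untsch/D\"untsch--Winter representation theory: for the forward direction one passes to the canonical dual space of clans (or maximal round filters), checks that the Stone-like map is a dense contact embedding into $\RO(X)$, and then reads off the topological properties from \eqref{C5}, \eqref{C6}, \eqref{IA} via the correspondences you cite. Your identification of the dense embedding as the crux is correct, and \eqref{C5} is indeed what drives both density and the $T_1$ property of the dual space.

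The one place where your outline is genuinely soft is the backward direction: you need \eqref{C5}, hence weak regularity of $X$, but the stated hypotheses are only $\kappa$-normal, connected, and $T_1$. Your proposed fix---invoke semi-regularity and then separate a point from a closed set using $\kappa$-normality---does not go through as written, because $\kappa$-normality only separates disjoint \emph{regular closed} sets, and a singleton in a $T_1$ space need not be regular closed. In the Bennett--D\"untsch setting this is handled by working from the outset with semi-regular $T_1$ spaces (so that $\RO(X)$ is a basis) and then obtaining weak regularity essentially from the existence of a dense subalgebra satisfying \eqref{C5}; alternatively, the canonical space produced in the forward direction is semi-regular by construction, so the biconditional is really a statement about the representation space rather than an arbitrary $X$. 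If you want a self-contained argument, you should make the semi-regularity assumption explicit rather than hope to derive weak regularity from $T_1+\kappa$-normality alone.
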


In consequence, by this and by Theorem~\ref{th:conclusion} we obtain:
\begin{corollary}
    If $X$ is a~$\kappa$-normal, connected $T_1$-space, and $\frB$ is a~dense subalgebra of $\langle\RO(X),\mathord{\cont}\rangle$, then in $\frB$\/\textup{:} $\prePtW^\omega\subseteq\prePt\subseteq\prePtW$. If additionally $X$ as a~topological space satisfies the countable chain condition, then $\prePtW=\prePt$.
\end{corollary}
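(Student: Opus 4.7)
The proof plan is to chain the preceding Lemma with Theorem~\ref{th:conclusion}, with a small bridging observation about the countable chain condition.

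First, I would establish the algebraic hypotheses. Since $X$ is a $\kappa$-normal, connected $T_1$-space and $\frB$ is a dense subalgebra of $\langle\RO(X),\mathord{\cont}\rangle$, the preceding Lemma (of \citealp{Bennett-Duntsch-AAT}) gives $\frB\in\TBCA+\eqref{IA}+\eqref{C5}+\eqref{C6}$. Since $\TBCA\subseteq\BCA$, in particular $\frB\in\BCA+\eqref{IA}+\eqref{C5}+\eqref{C6}$, so Theorem~\ref{th:conclusion} applies and yields $\prePtW^\omega\subseteq\prePt\subseteq\prePtW$. This handles the first claim.

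For the second claim, I need to translate the topological ccc of $X$ into the algebraic ccc of $\frB$ so that the second half of Theorem~\ref{th:conclusion} can be invoked. Take any antichain $\mathcal{A}\subseteq\frB\setminus\{\zero\}$. Because the Boolean meet on $\frB$ is set-theoretical intersection, the elements of $\mathcal{A}$ are pairwise disjoint non-empty regular open subsets of $X$. Hence $\mathcal{A}$ is a family of pairwise disjoint non-empty open subsets of $X$, and so it is countable by the topological ccc. Therefore $\frB$ satisfies the algebraic ccc, and the second part of Theorem~\ref{th:conclusion} gives $\prePtW=\prePt$.

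There is no real obstacle here; the argument is purely a bookkeeping combination of previously established results. The only point meriting a sentence is the ccc translation, and even there the key observation is just that density of $\frB$ in $\RO(X)$ is not needed for this direction: any antichain in $\frB\setminus\{\zero\}$ is automatically a disjoint family of non-empty open sets in $X$, so the topological ccc directly bounds its cardinality.
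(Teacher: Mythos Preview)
Your proposal is correct and follows essentially the same route as the paper, which simply states that the corollary is obtained ``by this [the preceding Lemma] and by Theorem~\ref{th:conclusion}''. Your added sentence translating the topological ccc of $X$ into the algebraic ccc of $\frB$ makes explicit a step the paper leaves implicit, and your observation that density is not needed for that particular direction is accurate.
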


Theorem~\ref{th:Q-and-A-is-W} shows that the second-order statement `$\prePt\subseteq\prePtW$' corresponds to a~first-order property of atomlessness of Boolean algebras. It is then natural to ask if the statements `$\prePtW^\omega\subseteq\prePt$' and `$\prePtW\subseteq\prePt$' correspond to any familiar, not necessarily first-order, properties of BCAs, or topological BCAs. Let us focus on this, and let us prove some negative results.

Observe that the coherence axiom cannot be deduced by means of `$\prePtW\subseteq\prePt$' (more so, by means of `$\prePtW^\omega\subseteq\prePt$'), in the following sense:
\begin{proposition}\label{prop:C6-cannot-be-deduced}
    \eqref{C6} is not true in $\BCA+\eqref{IA}+\eqref{C5}+\eqref{eq:no-atoms}+\prePtW\subseteq\prePt$.
\end{proposition}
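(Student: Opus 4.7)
The plan is to exhibit as a counterexample the topological contact algebra $\frB\defeq\langle\RO(\Real\sqcup\Real),\mathord{\cont}\rangle$ associated to the disjoint union of two copies of~$\Real$ with the standard topology. Since $\Real\sqcup\Real$ is $T_1$, regular (hence weakly regular), normal (hence $\kappa$-normal), disconnected, and without isolated points, the correspondences recalled just before Theorem~\ref{th:conclusion} yield $\frB\in\BCA+\eqref{IA}+\eqref{C5}+\eqref{eq:no-atoms}$, while \eqref{C6} fails because the clopen region $\langle\Real,\emptyset\rangle$ is $\notcon$-separated from its complement $\langle\emptyset,\Real\rangle$. The substantive task is then to verify $\prePtW\subseteq\prePt$ in~$\frB$.

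We identify $\frB$ with $\RO(\Real)\times\RO(\Real)$ under componentwise Boolean operations and the product-type contact $\langle x_1,x_2\rangle\con\langle y_1,y_2\rangle\iffslim x_1\cont y_1\vee x_2\cont y_2$ (the two components of $\Real\sqcup\Real$ being clopen). The key structural claim is that every W-representative of $\frB$ is $\sim$-equivalent to a~\emph{one-sided} abstractive set of the form $\{\langle a_n,\zero\rangle:n\in\omega\}$ (or symmetrically $\{\langle\zero,b_n\rangle:n\in\omega\}$). To prove this, let $A=\{\langle a_n,b_n\rangle\}_n$ be a~W-representative of $\frB$. Its coordinate projections are $\ll$-chains in $\RO(\Real)$ whose infima are both $\zero$; since the only clopen elements of $\RO(\Real)$ are $\zero$ and $\Real$, and neither can be the eventual value of such a chain, each projection is either eventually $\zero$ or contains infinitely many distinct non-zero terms. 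If one projection (say $\{b_n\}$) is eventually $\zero$, the tail of $A$ is the required one-sided set coinitial with~$A$. Otherwise both projections are cofinally non-zero, and $A'\defeq\{\langle a_n,\zero\rangle:n\in\omega\}$ is itself abstractive (infinitely many distinct left coordinates), satisfies $A'\covered A$ (witnessed by $\langle a_n,\zero\rangle\leq\langle a_n,b_n\rangle$), but $A\ncovered A'$ because no member of~$A$ has zero right coordinate; this yields $[A']\prec[A]$, contradicting the minimality of~$[A]$.

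A short lifting argument shows that the $\{a_n\}$ arising from a one-sided W-representative of $\frB$ is itself a W-representative of $\RO(\Real)$: any abstractive $\{b_n\}$ in $\RO(\Real)$ with $\{b_n\}\covered\{a_n\}$ lifts to $\{\langle b_n,\zero\rangle\}\covered\{\langle a_n,\zero\rangle\}$ in~$\frB$, so minimality in~$\frB$ returns $\{a_n\}\covered\{b_n\}$. Since $\RO(\Real)$ satisfies the hypotheses of Theorem~\ref{th:conclusion} ($\Real$ is $T_1$, connected, $\kappa$-normal and has the countable chain condition), every W-representative of $\RO(\Real)$ is already a~G-representative. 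It remains to verify \eqref{r3} for the lift $\{\langle a_n,\zero\rangle\}$ in~$\frB$: if $\langle u_1,u_2\rangle$ and $\langle v_1,v_2\rangle$ each overlap every $\langle a_n,\zero\rangle$ in~$\frB$, then (since $u_2\cdot\zero=\zero$) $u_1$ and $v_1$ each overlap every $a_n$ in $\RO(\Real)$, whence \eqref{r3} for $\{a_n\}$ gives $u_1\cont v_1$, and consequently $\langle u_1,u_2\rangle\con\langle v_1,v_2\rangle$ in~$\frB$; conditions \eqref{r0}--\eqref{r2} are immediate. Proposition~\ref{prop:A-r2s}(iii) then transports G-representative status along~$\sim$ back to the original~$A$.

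The principal obstacle is the one-sidedness claim in the second paragraph, specifically the construction of the strict predecessor $A'$ when both coordinates are cofinally non-zero. The crucial input is the absence of non-trivial clopens in $\RO(\Real)$: it prevents a coordinate chain of non-zero elements with trivial infimum from being eventually constant, and thereby guarantees that $A'$ is infinite, hence genuinely abstractive.
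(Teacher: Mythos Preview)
Your proof is correct, but it takes a markedly different (and considerably longer) route than the paper's. The paper's proof is a one-liner: take $\langle\RO(\Real),\mathord{\overl}\rangle$, the regular open algebra of the reals with the \emph{overlap} relation as contact. Here $\ll$ collapses to $\leq$, so \eqref{IA} and \eqref{C5} hold trivially, the algebra is atomless, and \eqref{C6} fails outright since $x\overl-x$ is always false. The inclusion $\prePtW\subseteq\prePt$ is satisfied vacuously, because by Lemma~\ref{lem:no-W-points-atomless-C=O} a complete BCA with $\con=\overl$ has no W-representatives at all.

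Your example $\langle\RO(\Real\sqcup\Real),\cont\rangle$ is non-vacuous: it genuinely has W-representatives, and you verify $\prePtW\subseteq\prePt$ by a structural reduction to the connected factors. The paper itself, immediately after the proposition, remarks that the vacuous proof is ``somewhat sneaky'' and sketches essentially your strategy with $X=[0,1]\cup[2,3]$ in place of $\Real\sqcup\Real$. So what you have done is flesh out the paper's informal follow-up remark rather than reproduce its official proof. The payoff of your approach is a more honest witness; the cost is the extra work of the one-sidedness argument and the lifting step.

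One small point worth making explicit: when you write $A=\{\langle a_n,b_n\rangle\}_n$ you are tacitly passing to a countable coinitial subsequence. This is legitimate because $\Real\sqcup\Real$ is second countable (hence ccc), so every $\ll$-chain in $\RO(\Real\sqcup\Real)$ has countable coinitiality; but since the whole issue under discussion is precisely whether countability assumptions can be dropped, it would be worth saying so.
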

\begin{proof} Take the contact algebra $\langle\RO(\Real),\mathord{\overl}\rangle$ and apply Lemma~\ref{lem:no-W-points-atomless-C=O}.
\end{proof}

If the reader finds the reference to Lemma~\ref{lem:no-W-points-atomless-C=O} somewhat sneaky (as there are no W-representatives in BCAs that satisfy the premises), we can construct an example of a~BCA that has W-representatives and meets the conditions of the proposition but not \eqref{C6} by taking the topological space $X\defeq[0,1]\cup[2,3]$ as the subspace of the reals with the standard topology and considering $\langle\RO(X),\cont\rangle$. $X$ is normal (more so, $\kappa$-normal), $T_1$, satisfies the countable chain condition. Thus $\prePtW\subseteq\prePt$. $\RO(X)$ is obviously atomless.  Moreover, $X$ is metrizable, so it's a~concentric space and thus has W-representatives by Theorem~\ref{th:concentric-W-representatives}. But $X$ has two non-trivial components, and thus $\cont$ does not satisfy \eqref{C6}. Does adding the assumption $\prePtW\neq\emptyset$ does not improve the situation.

Making a suitable modification to $X$, e.g., taking $Y\defeq[0,1]\cup\{2\}$ we see that:
\begin{proposition}
    \eqref{eq:no-atoms} is not a~consequence of $\BCA+\eqref{IA}+\eqref{C5}+\prePtW\subseteq\prePt$.
\end{proposition}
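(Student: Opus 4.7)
The plan is to follow the hint and use the topological contact algebra $\langle\RO(Y),\mathord{\cont}\rangle$, where $Y\defeq[0,1]\cup\{2\}$ is viewed as a subspace of $\Real$ with the standard topology. The first three conditions are easy: $Y$ is metrizable, hence normal (so $\kappa$-normal), regular (so weakly regular), and $T_1$, so by the standard correspondences recalled before Theorem~\ref{th:conclusion}, $\langle\RO(Y),\mathord{\cont}\rangle$ belongs to $\BCA+\eqref{IA}+\eqref{C5}$. Since $\{2\}$ is an isolated clopen point of $Y$, it is an atom of $\RO(Y)$, so \eqref{eq:no-atoms} fails. What remains is to verify that $\prePtW\subseteq\prePt$ holds in $\RO(Y)$.

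The key structural observation is the Boolean decomposition $\RO(Y)\cong\RO([0,1])\times\mathsf{2}$ induced by the clopen partition $Y=[0,1]\cup\{2\}$: every $x\in\RO(Y)$ writes uniquely as $x=x_1\cup x_2$ with $x_1=x\cap[0,1]\in\RO([0,1])$ and $x_2\in\{\emptyset,\{2\}\}$. Given a W-representative $A$, set $A_-\defeq\{a\in A:2\notin a\}$; then $A_-\neq\emptyset$, since otherwise $\{2\}$ would be a non-zero lower bound of $A$, violating~\eqref{A}. I would then show $A\sim A_-$: $A_-\subseteq A$ gives one direction, and for the other, any $a\in A\setminus A_-$ (with $2\in a$) is $\ll$-above every $a'\in A_-$ (if we had $a\ll a'$, then $a\leq a'$ would give $2\in a'$), providing the needed element of $A_-$ below~$a$. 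Since $[A_-]=[A]$ is minimal in~$\preceq$, $A_-$ is itself a W-representative. The same coverage argument shows that naive ``polluted'' candidates such as $\{B_n\cup\{2\}\}_{n\in\omega}$ (with $\{B_n\}_{n\in\omega}$ a local basis at some $p\in[0,1]$) are \emph{not} W-representatives: they are strictly refined by $\{B_n\}_{n\in\omega}$.

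Next I would argue that $A_-$, viewed as a subset of $\RO([0,1])$, is a W-representative of $\RO([0,1])$. For sets with all elements inside $[0,1]$, closures in $Y$ and in $[0,1]$ agree (since $[0,1]$ is closed in $Y$), so $\ll$ transfers; and any lower bound of $A_-$ in $\RO(Y)$ is automatically a subset of $[0,1]$, so the infima in the two algebras agree. Hence $A_-$ is abstractive in $\RO([0,1])$, and any strict refinement there would transport to a strict refinement in $\RO(Y)$, contradicting the W-representativeness of $A_-$ in $\RO(Y)$. Since $[0,1]$ is a connected, $\kappa$-normal $T_1$-space satisfying CCC and $\RO([0,1])$ is atomless, Theorem~\ref{th:conclusion} gives $\prePtW=\prePt$ for $\RO([0,1])$; hence $A_-$ is a G-representative of $\RO([0,1])$.

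To conclude, I would transfer G-representativeness back to $\RO(Y)$. Conditions~\eqref{r0}--\eqref{r2} for $A_-$ in $\RO(Y)$ are immediate because $\ll$ agrees. For~\eqref{r3}, given $x,y\in\RO(Y)$ overlapping every $a\in A_-$, writing $x_1\defeq x\cap[0,1]$ and $y_1\defeq y\cap[0,1]$ (each regular open in $[0,1]$ by the clopen decomposition), the overlaps reduce to $x_1$ and $y_1$ overlapping every $a\in A_-$ in $\RO([0,1])$; so~\eqref{r3} for $A_-$ there yields $x_1\cont y_1$, which lifts to $x\con y$ in $\RO(Y)$ via~\eqref{C3}. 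Finally, Proposition~\ref{prop:A-r2s}(iii) applied to abstractive~$A$, G-representative~$A_-$, and the covering $A\covered A_-$ (which holds since $A_-\subseteq A$) yields $A\in\prePt$. The main subtle point, and the expected obstacle, is the one noted in the second paragraph: ``$\{2\}$-augmented'' point-nests are not W-representatives, which is what allows the presence of the atom $\{2\}$ to coexist with the inclusion $\prePtW\subseteq\prePt$.
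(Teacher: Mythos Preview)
Your argument is correct and uses exactly the same witness $Y=[0,1]\cup\{2\}$ that the paper names. The difference is one of detail: the paper's proof is essentially a one-liner that points to $Y$ and implicitly relies on the preceding discussion of $X=[0,1]\cup[2,3]$, where the justification offered for $\prePtW\subseteq\prePt$ is ``$X$ is normal (more so, $\kappa$-normal), $T_1$, satisfies the countable chain condition. Thus $\prePtW\subseteq\prePt$.'' Taken literally, that appeal does not match the hypotheses of Theorem~\ref{th:BG-improved} or Theorem~\ref{th:conclusion}, both of which require \eqref{C6}---and $X$, $Y$ are disconnected. Your proof supplies precisely the missing step: the clopen decomposition $\RO(Y)\cong\RO([0,1])\times\mathsf{2}$ lets you pass from an arbitrary W-representative $A$ to the coinitial $A_-$ living entirely inside the connected piece $[0,1]$, where Theorem~\ref{th:conclusion} \emph{does} apply; you then lift the G-property back via Proposition~\ref{prop:A-r2s}(iii). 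So the route is the same, but your version is more complete than the paper's.
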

\noindent Of course, by Theorem~\ref{th:Q-and-A-is-W}, in $\RO(Y)$ we have $\prePt\nsubseteq\prePtW$, specifically with $\{\{2\}\}$ being the culprit.

As for the relation of \eqref{C5} to `$\prePtW\subseteq\prePt$', observe that since the axiom is not a~consequence of $\{\text{\eqref{C0}--\eqref{C4}},\eqref{IA},\eqref{C6},\eqref{eq:no-atoms}\}$, it cannot be a~consequence of $\{\text{\eqref{C0}--\eqref{C4}},\text{`$\prePtW^\omega\subseteq\prePt$'}\}$ by Theorem~\ref{th:BG-improved}. Thus there may be no deeper connection between the two statements.

The above are selective remarks that show what cannot be proven by or about the inclusion `$\prePtW\subseteq\prePt$'. So, we put forward other open problems:
\begin{problem}
    Find $\frB\in\BCA+\eqref{C6}+\prePtW\subseteq\prePt$ in which the interpolation axiom fails.
\end{problem}
\begin{problem}
    Characterize the class $\BCA+\prePtW\subseteq\prePt$, with possibly additional constraints.
\end{problem}
We also ask:
\begin{problem}
    Is there any topological property of a space $X$ that corresponds to the statement $\prePtW\subseteq\prePt$, as a~second order statement formulated about a~subalgebra of $\langle\RO(X),\mathord{\cont}\rangle$?
\end{problem}

\section{W-representatives are G-representatives (the general case)}\label{sec:W-points-are-G-points-general}

Having proven Theorem 5.3 (of whose counterpart is---by a fortunate coincidence---our Theorem~\ref{th:BG-improved}) \citeauthor{Biacino-Gerla-CSGWDP} write:
\begin{quotation}
  [\ldots] the just proven theorem holds for W-representatives that are expressible by [countable] sequences. We do not know if this result holds in any case.
\end{quotation}
In this section, we prove that the we can extend Theorem~\ref{th:BG-improved} to W-representatives of an arbitrary coinitiality provided we assume the \emph{generalized} version of the interpolation axiom.

Observe that by \eqref{eq:ll-sum-prod} the interpolation axiom is equivalent to the following:
\[
x\ll x_1\wedge\ldots\wedge x\ll x_n\rarrow (\exists z\in\frB)\,(x
\ll z\wedge z\ll x_1\cdot\ldots\cdot x_n)\,.
\]
Thus its natural extension to infinite cases is the following second-order constraint:
\begin{equation}\tag{GIA}\label{GIA}
    (\forall Y\in\power(\frB))\,((\forall y\in Y)\,x\ll y\rarrow(\exists z\in\frB)\,(x\ll z\wedge(\forall y\in Y)\,z\ll y))\,.
\end{equation}

Observe that \eqref{GIA} axiom puts a serious constraint upon the existence of W-representatives in the class of complete contact algebras:
\begin{theorem}\label{th:GIA+C5-no-W}
If $\frB\in\BCA+\eqref{C5}+\eqref{GIA}$ is complete, then there are no Whitehead representatives in $\frB$.
\end{theorem}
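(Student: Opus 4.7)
I would argue by contradiction: assume that $A$ is a Whitehead representative in $\frB$ and construct an abstractive set $B$ with $B \covered A$ but $A \ncovered B$, thereby contradicting the minimality of $[A]$ in the coinitial order on abstractive sets.

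The first technical ingredient is a closure property derived from \eqref{GIA} and completeness: if $x \ll y$ for every $y \in Y \subseteq \frB$, then $x \ll \bigwedge Y$. Indeed, \eqref{GIA} produces an interpolant $z$ with $x \ll z$ and $z \ll y$ for every $y \in Y$; since $z \leq \bigwedge Y$, the conclusion follows from \eqref{eq:ll-ingr->ll}. Dualising via \eqref{eq:ll-contraposition}, $\ll$ is also closed under joins on the left, so by \eqref{C5} every non-zero $x \in \frB$ admits a non-zero maximum non-tangential part $p(x) := \bigvee\{y : y \ll x\}$ satisfying $p(x) \ll x$, and the map $p$ preserves meets.

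Now suppose $A$ is a W-representative. Since $A$ is abstractive and $\frB$ is complete, $\bigwedge A = \zero$. Pick $a_0 \in A$ with $a_0 \neq \one$ and apply \eqref{C5} to $-a_0$ to obtain a non-zero $c$ with $c \ll -a_0$; after replacing $A$ by its coinitial subchain $\{a \in A : a \leq a_0\}$ (which represents the same W-point), we have $c \ll -a$ for every $a \in A$. Applying \eqref{GIA} to $c$ and $\{-a : a \in A\}$ yields a non-zero $z$ with $z \ll -a$ for every $a \in A$, so $z$ is $\ll$-separated from every member of $A$.

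The main step is to combine such a $z$ with $A$ into a $\ll$-chain $B$ strictly refining $A$, and I expect this to be the principal obstacle. The naive choice $b_k := a_k + p^k(z)$ for a countable $\ll$-descending subsequence $(a_k) \subseteq A$ can be verified to yield a $\ll$-chain via the contrapositive of \eqref{C4} together with $z$'s separation from $A$, but it automatically satisfies $a_k \leq b_k$, so $A \covered B$ trivially and no strictness is gained. The correct construction must instead splice $z$ with $A$ so that some member of $B$ lies above no element of $A$ (witnessing $A \ncovered B$) while the remaining members still witness $B \covered A$; the delicacy is that guaranteeing $\bigwedge B = \zero$ then requires calibrating the $p$-iterates of $z$ against the descent of $A$, for which the full right-meet closure of $\ll$ furnished by \eqref{GIA}---and not merely the pairwise interpolation from \eqref{IA}---is essential, since complete distributivity is not generally available in a complete Boolean algebra.
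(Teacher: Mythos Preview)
Your proposal does not constitute a proof: you correctly set up the closure of $\ll$ under arbitrary meets on the right (which is exactly the first step the paper takes), and you correctly locate a region $z$ separated from every member of $A$. But at the decisive point you write that the naive splice $b_k\defeq a_k+p^k(z)$ fails, and then merely \emph{describe} what a working construction ``must'' achieve without producing one. Nothing after that paragraph is an argument; you have identified an obstacle, not removed it. In particular, you give no mechanism for arranging simultaneously that $B$ is a $\ll$-chain, that $\bigwedge B=\zero$, and that some $b\in B$ contains no element of $A$.

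The paper's proof avoids this combinatorial construction entirely. From your closure property $x\ll\bigwedge J$ it passes to the equivalent infinitary form of \eqref{C4}, namely $x\con\bigvee J\rarrow(\exists y\in J)\,x\con y$, and then invokes a result of Gruszczy\'nski--Mench\'on: in any BCA this infinitary distributivity together with \eqref{C5} forces $\mathord{\con}=\mathord{\overl}$. Once contact collapses to overlap, Lemma~\ref{lem:no-W-points-atomless-C=O} (which already contains the delicate splitting-of-an-abstractive-set argument you were groping toward, carried out via the even/odd partition of a descending sequence) finishes the job. So the route you were attempting is in spirit the content of Lemma~\ref{lem:no-W-points-atomless-C=O}, but the paper factors it out: first reduce to the purely mereological situation, then apply the lemma where the construction is clean because $\ll$ coincides with $<$.
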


\begin{proof}
It is easy to show that in complete Boolean contact algebras, \eqref{GIA} is equivalent to the following second-order constraint:
\[
(\forall y\in J)\,x\ll y\rarrow x\ll \bigwedge J
\]
which is equivalent to the following generalized version of \eqref{C4}:
\[
x\con\bigvee J\rarrow(\exists y\in J)\,x\con y\,.
\]
As it has been shown in \citep{Gruszczynski-Menchon-FCRTMOAB}, in presence of \eqref{C5} the latter axiom entails that $\mathrm{\con}=\mathrm{\overl}$. Thus by Lemma~\ref{lem:no-W-points-atomless-C=O}, $\frB$ does not have any W-representatives.
\end{proof}
It follows that if we want to have BCAs that satisfy \eqref{GIA} and have Whitehead representatives, we must drop either \eqref{C5} or completeness, or both. Fortunately, we can demonstrate that such algebras exist.
\begin{theorem}\label{th:GIA-consistency}
Any $\langle\frB,\mathord{\cond}\rangle$ satisfies    \eqref{GIA}.
\end{theorem}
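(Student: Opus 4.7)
The plan is to leverage the explicit characterization of $\lld$ in $\langle\frB,\mathord{\cond}\rangle$ derived earlier in Section~\ref{sec:G-points-are-W-points}, namely
\[
x\lld y \iffslim x\leq y\wedge (x\leq -\boldd\vee\boldd\leq y).
\]
Given $x\in\frB$ and $Y\subseteq\frB$ with $x\lld y$ for every $y\in Y$, I split on whether $x$ lies below $-\boldd$ or not, and in each case write down the interpolant explicitly.

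Suppose first that $x\leq -\boldd$. Then the first disjunct of the characterization gives $x\lld x$ outright, so the choice $z\defeq x$ does the job: $x\lld z$ is immediate, while $z\lld y$ for each $y\in Y$ is precisely the hypothesis.

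Suppose instead $x\nleq -\boldd$. Then for every $y\in Y$ the first disjunct in $x\lld y$ fails, forcing $\boldd\leq y$. Set $z\defeq x+\boldd$. Since $x\leq z$ and $\boldd\leq z$, the second disjunct of the characterization yields $x\lld z$. For each $y\in Y$, combining $x\leq y$ with $\boldd\leq y$ gives $z\leq y$, and $\boldd\leq y$ again engages the second disjunct, so $z\lld y$. The vacuous subcase $Y=\emptyset$ in this branch is absorbed by the same witness (or, if preferred, by $z\defeq\one$, which satisfies $x\lld\one$ via $\boldd\leq\one$).

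No essential obstacle arises: the relation $\lld$ in this construction is so degenerate --- determined entirely by whether the first argument sits below $-\boldd$ or the second above $\boldd$ --- that a simultaneous interpolant can be produced by a direct formula. The only mild point is to dispatch the empty-$Y$ case, which is routine. As a by-product, the proof makes transparent why Theorem~\ref{th:GIA+C5-no-W} forbids completeness in the presence of \eqref{C5}: it is precisely the existence of regions, such as $\boldd$ itself, without proper non-tangential parts (Corollary~\ref{prop:C5-fails-for-d}) that makes the interpolant construction cost-free.
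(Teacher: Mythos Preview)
Your proof is correct and follows essentially the same approach as the paper: both arguments exploit the explicit characterization of $\lld$ to produce the same two interpolants, $z=x$ when $x\leq-\boldd$ and $z=x+\boldd$ otherwise. Your case split on whether $x\leq-\boldd$ is slightly cleaner than the paper's per-element phrasing, but the content is identical; the closing paragraph is commentary rather than proof and could be dropped.
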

\begin{proof}
Take as the distinguished element of a $\frB\in\BA$ any $\boldd\neq\zero$ and consider $\cond$. Suppose that $A\subseteq\frB$, and let $x$ be such that for all $a\in A$, $x\lld a$, which means that either $x\leq a-\boldd$ or $x+\boldd\leq a$. In the former case, $x\leq-\boldd$, so $x\lld x$, and we are done. In the latter, $x+\boldd\lld a$, as $-a$ must be disjoint from $x+\boldd$. Since it is always true that $x\lld x+\boldd$, we have that $x+\boldd$ is the region that is strongly between $x$ and $a$.
\end{proof}
In light of the above theorem, any Boolean algebra can be turned into a Boolean contact algebra that meets the generalized interpolation axiom. In particular, there will be such algebras that are either complete or incomplete, with or without any atoms, and of arbitrary cardinality. However, in light of Corollary~\ref{prop:C5-fails-for-d}, none of these algebras will satisfy \eqref{C5}.

Having shown the consistency of \eqref{GIA} with the standard axioms for contact, we go on to prove the following:
\begin{theorem}\label{th:BG-general}
  If $\frB\in\BCA+\eqref{GIA}+\eqref{C6}$, then $\prePtW\subseteq\prePt$.
\end{theorem}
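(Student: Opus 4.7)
The plan is to mirror the proof of Theorem~\ref{th:BG-improved}, replacing the countable interpolation at the heart of the construction with a transfinite recursion whose limit steps are handled by the generalized interpolation axiom. Let $A\in\prePtW$ and suppose, towards contradiction, that $A\notin\prePt$. Since $A\in\Abs$, it satisfies \eqref{r0}, \eqref{r1}, and (via Proposition~\ref{prop:A-r2s}(i)) \eqref{r2}, so the only remaining way for $A$ to fail to be a G-representative is a failure of~\eqref{r3}. Choose witnesses $u,v\in\frB$ with $u\overl x$ and $v\overl x$ for every $x\in A$ but $u\notcon v$; equivalently, $u\ll -v$. In particular $u,v\neq\zero$, and every $a\in A$ overlaps $v$, so $a\nleq -v$.

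Let $\kappa$ be the coinitiality of $A$ and fix a strictly descending sequence $(y_\alpha)_{\alpha<\kappa}$ in $A$ whose image is coinitial with $A$; by \eqref{r1} we have $y_\alpha\ll y_\beta$ whenever $\beta<\alpha<\kappa$. We construct $(u_\alpha)_{\alpha<\kappa}$ by transfinite recursion so that, for every $\alpha<\kappa$,
\begin{gather*}
u\ll u_\alpha\ll -v\,,\\
(\forall\beta<\alpha)\;u_\alpha\ll u_\beta\,.
\end{gather*}
At $\alpha=0$, use \eqref{IA} -- which is \eqref{GIA} in the singleton case -- on $u\ll -v$ to obtain $u_0$ with $u\ll u_0\ll -v$. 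At a successor $\alpha+1$, interpolate between $u$ and $u_\alpha$ to get $u_{\alpha+1}$ with $u\ll u_{\alpha+1}\ll u_\alpha$, and use transitivity of $\ll$ (\eqref{eq:ll-ll->ll}) to propagate $u_{\alpha+1}\ll u_\beta$ to all $\beta\leq\alpha$. At a limit $\alpha<\kappa$, the set $Y_\alpha\defeq\{u_\beta\mid\beta<\alpha\}$ satisfies $u\ll y$ for every $y\in Y_\alpha$ by the inductive hypothesis, so \eqref{GIA} applied to $u$ and $Y_\alpha$ furnishes $u_\alpha$ with $u\ll u_\alpha$ and $u_\alpha\ll u_\beta$ for every $\beta<\alpha$; since $u_\alpha\ll u_0\leq -v$, also $u_\alpha\ll -v$ by~\eqref{eq:ll-ingr->ll}.

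Set $B\defeq\{u_\alpha\cdot y_\alpha\mid\alpha<\kappa\}$. Each $u_\alpha\cdot y_\alpha\neq\zero$ because $u\leq u_\alpha$ and $u\overl y_\alpha$, so \eqref{r0} holds. For $\beta<\alpha$, the relations $u_\alpha\ll u_\beta$ and $y_\alpha\ll y_\beta$ together with \eqref{eq:ll-sum-prod} give $u_\alpha\cdot y_\alpha\ll u_\beta\cdot y_\beta$, verifying~\eqref{r1}. Moreover $u_\alpha\cdot y_\alpha\leq -v\neq\one$, so $u_\alpha\cdot y_\alpha\neq\one$; Proposition~\ref{prop:C6-ll-in-part} then converts the $\ll$-chain into a strictly $<$-descending chain, which yields \eqref{A}. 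Hence $B\in\Abs$. Given $a\in A$, coinitiality provides $\alpha<\kappa$ with $y_\alpha\leq a$, so $u_\alpha\cdot y_\alpha\leq a$, and therefore $B\covered A$. On the other hand, $u_0\cdot y_0\leq -v$ while no $a\in A$ lies below $-v$, so $A\ncovered B$. This contradicts $A\in\prePtW$ via the characterization~\eqref{eq:alternative-W-representative}.

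The main obstacle compared to the countable case is exactly the limit step of the recursion: ordinary interpolation interpolates only between a region and one upper region, and iterating it cannot produce a $u_\alpha$ simultaneously below a family of previously constructed regions of arbitrary cardinality. This is precisely what \eqref{GIA} delivers, and it is only at limit ordinals that the axiom is genuinely needed. Every other ingredient -- strict descent from \eqref{C6}, coverage from coinitiality, and non-coverage from $u_0\cdot y_0\leq -v$ -- is a direct transfinite reading of the proof of Theorem~\ref{th:BG-improved}.
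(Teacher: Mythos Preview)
Your proof is correct and follows essentially the same route as the paper's: both pass to a coinitial sequence in $A$, use \eqref{GIA} at limit stages of a transfinite recursion to build a strictly $\ll$-descending interpolating sequence $(u_\alpha)$ between $u$ and $-v$, form the products $u_\alpha\cdot y_\alpha$, and exhibit the resulting abstractive set as one covered by $A$ but not covering it. The only cosmetic difference is that you argue by contradiction from $A\in\prePtW$, whereas the paper proves the contrapositive starting from an abstractive set that fails \eqref{r3}.
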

\begin{proof}
Suppose $|B|=\kappa$. Fix an abstractive set $A$. Since it is linearly ordered by $\leq$, it must have a~coinitial sequence $\langle x_\alpha\mid\alpha<\lambda\rangle$ for a~limit ordinal $\lambda\leqslant\kappa$. Again, suppose the sequence is not a~G-representative, i.e. it fails to satisfy \eqref{r3}. Let $u$ and $v$ be regions such that each of them overlaps every $x_\alpha$ from the sequence, yet they are separated, i.e., $u\ll-v$. We construct another $\lambda$-sequence repeating the technique from the proof of Theorem~\ref{th:BG-improved}, but applying \eqref{GIA}.

If $\lambda=\omega$, then it is enough to observe that \eqref{IA} is just a special case of \eqref{GIA} where $Y\defeq\{-v\}$. So assume that $\lambda>\omega$.
Suppose $\alpha<\lambda$ is a~limit ordinal and for every $\delta<\beta<\alpha$ we defined $u_\beta$ and $u_\delta$ such that:
\[
u\ll u_\beta\ll u_\delta\ll-v\,.
\]
Consequently, we have that $u$ is a non-tangential part of every element of the sequence $\langle u_\beta\mid\beta<\alpha\rangle$. Thus, by \eqref{GIA} we may choose $u_\alpha$ to be a~region $z$ such that $(\forall \beta<\alpha)\,z\ll u_\beta$ and $u\ll z$. Following this procedure we can construct the $\lambda$-sequence $\langle u_\alpha\mid\alpha < \lambda\rangle$. We go on to show that $\langle x_{_\alpha} \cdot u_{\alpha} \mid \alpha < \lambda \rangle $  is an abstractive set.  \eqref{r0} holds given that we have $u_\alpha  \cdot x_\alpha \neq \zero$ for any  $\alpha < \lambda$.  \eqref{r1} holds due to \eqref{eq:ll-sum-prod}: $u_{\delta}\cdot x_{\delta}\ll u_{\beta}\cdot x_{\beta}$ for any $\beta,  \delta \in \lambda$ such that $\beta < \delta$. Additionally,we have that $u_{\alpha+1}\cdot x_{\alpha +1 }\ll u_{\alpha}\cdot x_{\alpha}$ and $u_{\alpha}\cdot x_{\alpha} \neq\one$  for any  $\alpha < \lambda$. Therefore, by  Proposition~\ref{prop:C6-ll-in-part} the sequence $\langle x_{_\alpha} \cdot u_{\alpha} \mid \alpha < \lambda \rangle $  satisfies~\eqref{A}.

Notice that the sequence $\langle x_{_\alpha} \cdot u_{\alpha} \mid \alpha < \lambda \rangle $  is covered by  $\langle x_\alpha\mid\alpha<\lambda\rangle$. However, we also have that $x_\beta\nleq u_\delta \cdot x_\delta $ for any $\beta,  \delta \in \lambda$ since $u_\delta \cdot x_\delta \ll -v$ and  $x_\beta \nleq-v$. Therefore, $\langle x_\alpha\mid\alpha<\lambda\rangle$
 is not covered by $\langle x_{_\alpha} \cdot u_{\alpha} \mid \alpha < \lambda \rangle $. Thus, $\langle x_\alpha\mid\alpha<\lambda\rangle$ is not a~W-representative and we can conclude that also  $A$ is not a~W-representative.
\end{proof}

\begin{problem}
    In Theorem~\ref{th:GIA-consistency} we have shown that the generalized interpolation axiom is consistent with the standard axioms for BCAs. However, what we do not know is whether there are contact algebras in which both the axioms \eqref{GIA}, \eqref{C6} hold, and there is at least one W-representative. Thus we ask: \emph{are there such BCAs?}
\end{problem}


\section*{Acknowledgements}

This research was funded by the National Science Center (Poland), grant number 2020/39/B/HS1/00216, ``Logico-philosophical foundations of geometry and topology''.

For the purpose of Open Access, the authors have applied a CC-BY public copyright license to any Author Accepted Manuscript (AAM) version arising from this submission.

\appendix

\bibliographystyle{apalike}
\providecommand{\noop}[1]{}

\end{document}